\newtheorem{thm}[subsection]{Theorem}
\newtheorem{pro}[subsection]{Proposition}
\newtheorem{cor}[subsection]{Corollary}
\newtheorem{rk}[subsection]{Remark}
\newtheorem{defn}[subsection]{Definition}
\numberwithin{equation}{section} \setcounter{tocdepth}{1}
\newcommand{\G}{{\mathcal G}}
\newcommand{\FF}{\mathbb{F}}
\newcommand{\bea}{\begin{eqnarray}}
\newcommand{\eea}{\end{eqnarray}}
  \def\G{\Gamma}
\begin{document}

\title[Derivation of Five-Dimensional Lotka-Voltera Algebra ]
{Derivation of Five-Dimensional Lotka-Voltera Algebra}
\author{Ahmad Alarafeen\hspace{0.5cm} Izzat Qaralleh\hspace{0.5cm}AZHANA AHMAD}

\address{Ahmad Alarafeen\\School of Mathematical Sciences\\, Universiti Sains Malaysia\\ 11800
USM, Penang, Malaysia}
\email{{\tt ahmadalarfeen@gmail.com}}
\address{Izzat Qaralleh\\
Department of Mathematics\\
Faculty of Science, Tafila Technical
University\\
Tafila, Jordan}
\email{{\tt izzat\_math@yahoo.com}}
\address{AZHANA AHMAD\\School of Mathematical Sciences\\, Universiti Sains Malaysia\\ 11800
USM, Penang, Malaysia}

\email{{\tt azhana@usm.my}}
\date{Received: xxxxxx; Revised: yyyyyy; Accepted: zzzzzz.
\newline \indent $^{*}$ Corresponding author}

\begin{abstract}
Lotka-Volterra (LV) algebras are generally applied in solving biological problems and  in examining the interactions among neighboring individuals. With reference to the methods applied by Gutierrez-Fernandez and Garcia in \cite{17}. this study examines the derivations of five-dimensional LV algebras. Before explicitly describing its derivation space, anti-symmetric matrices are used to define the LV algebra A.
 \vskip 0.3cm \noindent {\it
Mathematics Subject Classification}: 17D92, 17D99, 39A70, 47H10.\\
{\it Key words}: Evolution algebra; Volterra quadratic stochastic operator; Nilpotent; Isomorphism; Derivation.
\end{abstract}

\maketitle

\section{Introduction}
Many studies have examined non-associative algebras and their different classes, including baric, evolution, Bernstein, train, and stochastic algebras. While each of these classes has its own unique definition, they are all referred to as genetic algebras. For instance, the genetics of abstract algebras were investigated in  \cite{20} by referring to their formal language.

 Bernstein  \cite{21},\cite{20} introduced the concept of population genetics in their study of   evolution operators
 , which are often described by using quadratic stochastic operators (see  \cite{13}).\\
  Introduced in 1981 by Itoh [10], Lotka-Volterra (LV) algebras are formulations that are used to describe solutions and singularities and are  associated with quadratic differential equations [10,17].
They have been widely applied in different mathematical fields, including dynamic systems, Markov chains, graph theory, and population genetics \cite{10, 12}.\\

Normal Bernstein algebras, which are LV algebras associated with a zero anti-symmetric matrix, are essential in solving the classical Bernstein problem \cite{4,5,8,14}.
Many studies have described the structure of LV algebras \cite{1,2,3,6,7,10,16,17,24,25,26,27}. For instance,
 Holgate described how the derivations of genetic algebras, such as LV algebras, can be interpreted in biological terms \cite{9}.
 The derivations of LV algebras have been  classified up to three dimensions in \cite{7} and up to three and four dimensions in \cite{3}.
 Meanwhile, \cite{17},  proposed several novel approaches for examining  LV algebras.
For example, digraphs can be attached to LV algebras to classify their derivations of into four dimensions at most without requiring much  effort and while taking into account the  special cases in arbitrary dimensions.\\
By using the techniques introduced in \cite{17}, this paper attempts to classify the derivations of LV algebras up to five dimensions.
\section{Preliminaries}
This section describes all the definitions and theorems used in this paper to achieve the aforementioned objective.
 \begin{defn}
 A commutative (not necessarily associative) algebra $\mathbf{A}$
over a field $\mathbb{F}$ which characteristic is not equal to $2$ is called LV algebra if the natural  basis $B=\{e_{1},...,e_{n}\}$ is admitted such that $e_{i}e_{j}=\alpha_{ij}e_{i}+\alpha_{ji}e_{j}$, where $\alpha_{ij}$ and $\alpha_{ji}$ are in  field $\mathbf{A}$, whereas $a_{ii}=\frac{1}{2}$ and  $\alpha_{ij}+\alpha_{ji}=1$ for all $i,j=1, 2, ..., n.$
 \end{defn}
 The  structural constants of LV-algebra $\mathbf{A}$ are formulated in matrix form as  $\left(\alpha_{ij}\right)_{i,j=1}^n$ with respect to natural basis B.
 \begin{rk}
Each LV algebra $\mathbf{A}$ is baric in the sense that it admits a non-trivial homomorphism $\omega:\mathbf{A}\rightarrow \mathbb{F}$, which is  defined by $\omega\left(\sum_{i}\lambda_ie_i\right)=\sum_{i}\lambda_i$ for all $\lambda_i\in \mathbb{F}.$
 \end{rk}

 \begin{defn}
   Let  $(A,\circ)$ be an algebra with a linear
mapping
 $D : A \to A$
 such that
  $$D(u\circ v)=D(u)\circ v+u\circ D(v)$$
   for all $u,v\in A$ , derivation on algebra  $(A,\circ)$  $D\equiv 0$ is clearly a
derivation, which we call a \textit{trivial} one.
 \end{defn}
 We obtain the following notations on the basis of those described in \cite{17}. The hyper-plane unit and  barideal of $\mathbf{A}$ are defined as $H=\{x\in \mathbf{A}|\omega(x)=1\}$
and $N=ker \omega=\{x\in \mathbf{A} | \omega(x)=0\}$,  respectively.\\
supp(x) denotes the support of $x=\sum_{i}\lambda_{i}e_{i}$ with respect to natural basis $B$, and is induced when   the corresponding coefficient is nonzero, that is
$i\in supp(x)$ if and only if $\lambda_{i}\neq 0$. For simplification, we denote $\{1,2,...,n\}$ by $\mathbb{S}$. For each index $k,$ we define $\mathbb{S}_{k}$ as the set $\{i\in \mathbb{S}|e_{i}e_{k}=(e_{i}+e_{k})/2\}$, where $k\in \mathbb{S}$. In this case, we have, $k\in\mathbb{S}_{k}$, and index $i\in \mathbb{S}$ belongs to $\mathbb{S}_{k}$ if and only if $k\in \mathbb{S}_{i}$. Meanwhile, if $x_{1},...,x_{k}\in \mathbf{A}$,
 then $\left\langle x_{1},...,x_{k}\right\rangle$ is the vector subspace of $\mathbf{A}$ spanned by $x_{1},...,x_{k}$.\\
 Let $I$ be a non-empty subset of $\mathbb{S}$, and U= $\left\langle e_{i}|i\in I\right\rangle$.
  For each element $x=\sum_{k=1}^{n}\lambda_{k}e_{k}$ in $\mathbf{A}$, the $U$-component of $x$ is defined by $[x]_{U}=\sum_{i\in I}\lambda_{i}e_{i}$. If $U=\{e_{i}\}$, then we denote $[x]_{U}$ by $[x]_{i}$.  $L(\mathbf{A})$ represents the set of all linear mappings from $\mathbf{A}$ to $\mathbf{A}$.
In the following, $D$ is a derivation of $\mathbf{A}$, and  $\lambda_{tk}\in \FF$ such that	
\begin{equation}
D(e_{k})=\sum_{t=1}^{n}\lambda_{tk}e_{t}, \ \ k =\overline{1,n}.
\end{equation}

\begin{pro}\cite{17}
The following statements hold true:\\
\begin{itemize}
\item[(1)] If $D\in Der(\mathbf{A})$, then $Im D\subset N$ and $supp(D(e_{k}))\subset \mathbb{S}$ for all $k\in \mathbb{S}$.\\
\item[(2)] If $|\mathbb{S}_{k}|=1$ for all $k\in \mathbb{S}$, then $Der(\mathbf{A})=\{0\}$.\\
\item [(3)] If $i\in\mathbb{S}_{k}$, then supp$(D(e_{k}))\subseteq \mathbb{S}_{i}$.\\
\item[(4)] $supp(D(e_{k}))\subseteq \cap_{u\in\mathbb{S}_{k}}\mathbb{S}_{i}$, that is,  if $i,j\in supp(D(e_{k}))$, then\\ $e_{i}e_{j}=\frac{(e_{i}+e_{j})}{2}$ and $e_{i}e_{k}=\frac{(e_{i}+e_{k})}{2}$.\\
\end{itemize}
\end{pro}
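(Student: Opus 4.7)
The plan is to address the four parts in the order $(1)\to(3)\to(4)\to(2)$, since the later parts all rest on the machinery obtained by applying $D$ to quadratic identities among natural basis vectors.

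For (1), the point is that each $e_k$ is idempotent: from $\alpha_{ii}=1/2$ combined with $\alpha_{ii}+\alpha_{ii}=1$ we immediately get $e_k^2=e_k$. Applying $D$ yields $D(e_k)=2\,e_k D(e_k)$, and then applying the baric character $\omega$ (with $\omega(e_k)=1$) gives $\omega(D(e_k))=2\omega(D(e_k))$, forcing $\omega(D(e_k))=0$, i.e.\ $D(e_k)\in N$. Linearity extends this to $\Img(D)\subseteq N$; the inclusion $\supp(D(e_k))\subseteq\mathbb{S}$ is automatic.

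For (3), I will apply $D$ to the relation $e_i e_k=(e_i+e_k)/2$ (valid since $i\in\mathbb{S}_k$), expand both products via $e_t e_k=\alpha_{tk}e_t+\alpha_{kt}e_k$ and $e_i e_t=\alpha_{it}e_i+\alpha_{ti}e_t$, and compare coefficients. For $s\notin\{i,k\}$ the identity $D(e_i)e_k+e_iD(e_k)=\tfrac12(D(e_i)+D(e_k))$ collapses to
\[
\lambda_{si}\bigl(\alpha_{sk}-\tfrac12\bigr)+\lambda_{sk}\bigl(\alpha_{si}-\tfrac12\bigr)=0.
\]
The specialization $i=k$ (differentiating $e_k^2=e_k$) further collapses this to $\lambda_{sk}(\alpha_{sk}-\tfrac12)=0$, already yielding the preliminary inclusion $\supp(D(e_k))\subseteq\mathbb{S}_k$. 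Plugging this back into the general identity, any $s\in\supp(D(e_k))$ with $s\notin\{i,k\}$ now satisfies $\alpha_{sk}=1/2$, which forces $\alpha_{si}=1/2$, i.e.\ $s\in\mathbb{S}_i$. The boundary cases $s=i$ and $s=k$ are direct, using $i\in\mathbb{S}_k\iff k\in\mathbb{S}_i$.

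Part (4) is an immediate consequence of (3) by intersecting over all $i\in\mathbb{S}_k$. The two explicit equalities in the ``that is'' clause follow by first applying (3) with $i=k$ to obtain $\supp(D(e_k))\subseteq\mathbb{S}_k$ (hence $e_i e_k=(e_i+e_k)/2$ for every $i$ in the support), and then re-applying (3) with this $i\in\mathbb{S}_k$ to place any other $j\in\supp(D(e_k))$ into $\mathbb{S}_i$ (hence $e_i e_j=(e_i+e_j)/2$). Finally for (2), the hypothesis $|\mathbb{S}_k|=1$ forces $\mathbb{S}_k=\{k\}$, so (3) gives $D(e_k)=\lambda_{kk}e_k$; combining with (1), $\omega(D(e_k))=\lambda_{kk}=0$, and so $D\equiv 0$. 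The principal hurdle will be the coefficient analysis behind (3): $\lambda_{sk}$ cannot be isolated from the single general-case equation alone, so the $i=k$ warm-up must be harvested first to constrain $\alpha_{sk}$ on the support of $D(e_k)$ before the general $i$ case closes.
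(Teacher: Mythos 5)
Your proof is correct. Note that the paper itself offers no proof of this proposition --- it is imported verbatim from \cite{17} --- so there is nothing internal to compare against; your argument (idempotency of each $e_k$ plus the baric character $\omega$ for part (1), differentiating $e_ie_k=(e_i+e_k)/2$ and comparing coefficients of $e_s$ for $s\notin\{i,k\}$, with the $i=k$ specialization harvested first to kill the $\lambda_{si}(\alpha_{sk}-\tfrac12)$ term, for part (3), and then (4) and (2) as corollaries) is exactly the standard route taken in that reference, and all steps check out.
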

 We define $\mathbb{N}_{k}=\cap_{i\in\mathbb{S}_{k}}\mathbb{S}_{i}$ for $k = 1, 2, ..., n.$, where index $j$ belongs to $\mathbb{N}_{k}$ if and only if $e_{i}e_{j}=(e_{i}+e_{j})/2$ for all $i\in \mathbb{S}_{k}$. We observe that $supp(D(e_{k}))\subseteq \mathbb{N}_{k}$ for all $j\in \mathbb{S}$.
\begin{thm}\cite{17}
  Let $e_{1}e_{2}=\alpha e_{1}+\alpha^{'}e_{2}$ where, $\alpha \neq \frac{1}{2}$, and define\\
\begin{center}
    $I = \mathbb{N}_{1}\cap \mathbb{N}_{2},\hspace{1.5cm}J = \mathbb{N}_{1}\setminus \mathbb{N}_{2},\hspace{1.5cm}K = \mathbb{N}_{2}\setminus \mathbb{N}_{1}$
\end{center}
\begin{center}
    U = $<e_{i}| i\in I >$ \hspace{1.5cm} V = $< e_{j} | j\in J>$\hspace{1.5cm} W = $< e_{k} | k\in K >$
\end{center}
Then,
\begin{center}
  $D(e_{1})= u + v, \hspace{0.6cm}and\hspace{0.6cm} D(e_{2})= u + w$.
\end{center}
  For some $u\in U$, we have $v\in V$ and $w\in W,$ where
 \begin{center}
  $\omega(u) = \omega(v) = \omega(w) = 0 $.
 \end{center}
Moreover,
\begin{center}
  $e_{j}e_{2}=\alpha e_{j}+\alpha^{'}e_{2},\hspace{0.6cm}and\hspace{0.6cm} e_{1}e_{k}=\alpha e_{1}+\alpha^{'}e_{k}$
\end{center}
for all $j\in supp(v)$ and $k\in supp(w)$.\\
\end{thm}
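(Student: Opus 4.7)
The plan is to apply the derivation identity to the defining product $e_1 e_2 = \alpha e_1 + \alpha' e_2$ and read off constraints by decomposing both sides against the partition of $\mathbb{N}_1 \cup \mathbb{N}_2$ into $I \sqcup J \sqcup K$. By Proposition (1) and (4) we already know $\Img D \subseteq N$ and $\supp(D(e_1)) \subseteq \mathbb{N}_1 = I \sqcup J$ and $\supp(D(e_2)) \subseteq \mathbb{N}_2 = I \sqcup K$, so we may unambiguously write $D(e_1) = u_1 + v$ and $D(e_2) = u_2 + w$ with $u_1, u_2 \in U$, $v \in V$, $w \in W$. The remaining task is to show $u_1 = u_2$, that all three summands lie in the augmentation ideal, and the two compatibility relations in the ``moreover.''

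The computational heart is the derivation identity $D(e_1) e_2 + e_1 D(e_2) = \alpha D(e_1) + \alpha' D(e_2)$. Since every index $i \in I$ lies in both $\mathbb{N}_1$ and $\mathbb{N}_2$, hence in both $\mathbb{S}_1$ and $\mathbb{S}_2$, we have $e_i e_2 = \tfrac12(e_i + e_2)$ and $e_1 e_i = \tfrac12(e_1 + e_i)$, so $u_1 e_2 = \tfrac12(u_1 + \omega(u_1) e_2)$ and $e_1 u_2 = \tfrac12(u_2 + \omega(u_2) e_1)$. I then expand $v \cdot e_2$ and $e_1 \cdot w$ in the natural basis using $e_j e_2 = \alpha_{j2} e_j + \alpha_{2j} e_2$ and $e_1 e_k = \alpha_{1k} e_1 + \alpha_{k1} e_k$, and compare coordinates in the master equation.

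Because $\alpha \neq 1/2$, neither $1 \in \mathbb{S}_2$ nor $2 \in \mathbb{S}_1$, so $1 \in J$ and $2 \in K$; in particular $e_1 \in V$ and $e_2 \in W$. Equating $U$-components yields $\tfrac12(u_1 + u_2) = \alpha u_1 + \alpha' u_2$, which with $\alpha' = 1 - \alpha$ becomes $(\tfrac12 - \alpha)(u_1 - u_2) = 0$; hence $u_1 = u_2 =: u$. Equating the coefficient of $e_j$ for $j \in J \setminus \{1\}$ gives $\nu_j(\alpha_{j2} - \alpha) = 0$, so whenever $j \in \supp(v)$ one must have $\alpha_{j2} = \alpha$ (and automatically $\alpha_{2j} = \alpha'$); this is the first moreover, and the mirror $K$-computation gives the second.

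It remains to force $\omega(u) = 0$, after which $\omega(v) = \omega(w) = -\omega(u) = 0$ follows from $\Img D \subseteq N$. I will extract this from the coefficient of $e_2$ in the master equation: the left side contributes $\tfrac12 \omega(u) + \sum_{j \in J} \nu_j \alpha_{2j}$, and using the compatibility relation just proved together with $\alpha_{21} = \alpha'$, the sum collapses to $\alpha' \omega(v)$. Substituting $\omega(v) = -\omega(u)$ gives $(\tfrac12 - \alpha')\omega(u) = 0$, and $\alpha' \neq 1/2$ finishes the proof. The \emph{one delicate ordering point} is that the compatibility relations must be established \emph{before} the $e_2$-coefficient equation collapses to something linear in $\omega(u)$; I expect this bookkeeping to be the main hurdle, though there is no genuine obstacle.
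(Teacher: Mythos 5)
Your proof is correct and complete. Note that the paper does not prove this theorem at all --- it is quoted from \cite{17} --- but your route (apply $D$ to $e_1e_2$, decompose supports over $I\sqcup J\sqcup K$ via $\supp(D(e_k))\subseteq\mathbb{N}_k$, observe $1\in J$, $2\in K$, and compare coefficients, deriving the compatibility relations first so that the $e_1$- and $e_2$-coefficient equations collapse) is the natural one and is essentially the argument of the cited source. The only cosmetic slip is in the last step: the $e_2$-coefficient of the left side also receives the term $\rho_2\alpha_{21}e_2$ from $e_1w$, which cancels against $\alpha'\rho_2$ on the right, leaving exactly the relation $\tfrac12\omega(u)+\alpha'\omega(v)=0$ that you then combine with $\omega(v)=-\omega(u)$ and $\alpha'\neq\tfrac12$.
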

\begin{cor}\cite{17}
The following statements hold true:
\begin{itemize}
\item[(i)] Let $\mathbf{A}$ be similar to the previous theorem. If $|I|\leq 1$, then $u = 0.$ If $|J|\leq 1$, then $v = 0.$ If $|K|\leq 1$, then $w = 0.$
\item[(ii)] Let $e_{1}e_{2}=\alpha e_{1}+\alpha^{'}e_{2}$,  where $\alpha \neq\frac{1}{2}$.  Therefore, $[D(e_{1})]_{i}=[D(e_{2})]_{i}$ for all $i \in \mathbb{N}_{1}\cap \mathbb{N}_{2}.$
\item[(iii)] If i belongs to $\mathbb{N}_{k}$, then we have  $\mathbb{N}_{i}\subseteq \mathbb{N}_{k}$.
\item[(iv)] If $i\in\mathbb{N}_{k}$ and $k\in \mathbb{N}_{i}$, then $\mathbb{N}_{i}=\mathbb{N}_{k}.$

\end{itemize}
\end{cor}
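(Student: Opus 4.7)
I would handle the four items in an order reflecting the logical dependence: (i) and (ii) are consequences of the structural decomposition supplied by the previous theorem, (iii) is the only genuinely combinatorial point, and (iv) is an immediate double application of (iii).

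For (i), I would observe that the theorem writes $u$, $v$, $w$ as weight-zero vectors living respectively in the subspaces $U=\langle e_i\mid i\in I\rangle$, $V=\langle e_j\mid j\in J\rangle$, $W=\langle e_k\mid k\in K\rangle$. If $|I|\leq 1$, then either $U=\{0\}$, in which case $u=0$ trivially, or $U=\langle e_{i_0}\rangle$, in which case $u=\lambda e_{i_0}$ and the condition $\omega(u)=\lambda=0$ forces $u=0$. The arguments for $v$ and $w$ are verbatim. For (ii), the key remark is that $I$, $J$, $K$ are pairwise disjoint subsets of $\mathbb{S}$, so when one extracts the $U$-component of $D(e_1)=u+v$ only $u$ survives, and likewise $[D(e_2)]_U=u$. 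Restricting further to a single index $i\in I=\mathbb{N}_1\cap \mathbb{N}_2$ gives $[D(e_1)]_i=[u]_i=[D(e_2)]_i$.

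The main step is (iii), and I would base it on the symmetry relation recorded in the preliminaries: $i\in \mathbb{S}_j$ if and only if $j\in \mathbb{S}_i$. Assume $i\in \mathbb{N}_k=\bigcap_{\ell\in \mathbb{S}_k}\mathbb{S}_\ell$. Then for every $\ell\in \mathbb{S}_k$ we have $i\in \mathbb{S}_\ell$, which by the symmetry reverses to $\ell\in \mathbb{S}_i$. In other words, $\mathbb{S}_k\subseteq \mathbb{S}_i$. Intersecting over a larger index set can only give a smaller intersection, so
\[
\mathbb{N}_i \;=\; \bigcap_{\ell\in \mathbb{S}_i}\mathbb{S}_\ell \;\subseteq\; \bigcap_{\ell\in \mathbb{S}_k}\mathbb{S}_\ell \;=\; \mathbb{N}_k,
\]
which is exactly the required inclusion. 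Item (iv) then follows by applying (iii) twice: $i\in \mathbb{N}_k$ gives $\mathbb{N}_i\subseteq \mathbb{N}_k$, while $k\in \mathbb{N}_i$ gives $\mathbb{N}_k\subseteq \mathbb{N}_i$, and the two inclusions combine to equality.

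The only place where I expect any real care is (iii): one must not confuse the two defining conditions (membership in $\mathbb{S}_k$ versus membership in $\mathbb{N}_k$) and must invoke the symmetry $i\in \mathbb{S}_j\Leftrightarrow j\in \mathbb{S}_i$ precisely once, to turn the hypothesis $i\in \mathbb{N}_k$ into the containment $\mathbb{S}_k\subseteq \mathbb{S}_i$. Everything else is formal. No new computation with the multiplication table of $\mathbf{A}$ is needed beyond what the preceding theorem has already delivered.
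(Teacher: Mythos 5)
Your proof is correct, and all four arguments are the natural ones: (i) from $\omega(u)=\omega(v)=\omega(w)=0$ on at most one-dimensional spans, (ii) from the disjointness of $I$, $J$, $K$ and the shared component $u$, (iii) from the symmetry $i\in\mathbb{S}_j\Leftrightarrow j\in\mathbb{S}_i$ turning $i\in\mathbb{N}_k$ into $\mathbb{S}_k\subseteq\mathbb{S}_i$, and (iv) by applying (iii) twice. Note that the paper itself gives no proof of this corollary --- it is quoted from reference [17] --- so there is nothing to compare against beyond confirming that your argument is the one the cited source relies on.
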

Attached to the LV algebra
are constantly a graph and a weight graph that are connected and directed. Therefore, LV algebras will usually be presented in graphical form throughout the rest of this paper. The following properties  are satisfied by graph $\Gamma(\mathbf{A},B)=(\mathbb{S},E)$  :
  \begin{enumerate}
    \item $(i,j)\in E$ if and only if $(i,j)\notin E$ for all $i,j\in \mathbb{S}$,$i\neq j$, and
    \item $(i,i)\notin E$ for all $i \in\mathbb{S}$.
  \end{enumerate}

 The  weight graph attached to the LV algebra A relative to natural basis is represented by the triple $ \Gamma^{\tau}( \mathbf{A},\varphi)$ = $(\mathbb{S},E,\tau)$ where $\tau:E\longrightarrow F$ is given by $\tau(i,j)=\alpha_{ij}$. A unique
LV algebra is defined up to isomorphism by a weight graph. An LV algebra has equivalent graphs $(\mathbb{S},E,\tau)$ and $(\mathbb{S}^{'},E^{'},\tau)$  when $\mathbb{S}=\mathbb{S}^{'}$.  For every pair $(i,j)\in E,$ :if $(i,j)\in E^{'},$ then $\emph{v(i,j)}=\rho(i,j)$ or $\emph{v(j,i)}=1-\rho(i,j)$. If and only if two graphs are equivalent, then they can be are attached to the same LV algebra relative to the same natural
basis. We define those edges whose weights are equal and not equal to $\frac{1}{2}$, as :

$$
\begin{tikzpicture}
[->,>=stealth',shorten >=-5pt,auto,thick,every node/.style={circle,fill=blue!20,draw}]
  \node (n2) at (6,8)  {};

  \node (n1) at (8,8) {};

\path[every node/.style={font=\sffamily\small}]
 (n1) edge node [above] {} (n2);
  \tikzstyle{every node}=[font=\small\itshape]

\end{tikzpicture}
$$
Meanwhile, we define those edges that are not equal to$\neq\frac{1}{2}$  as:
$$
\begin{tikzpicture}
[->,>=stealth',shorten >=-5pt,auto,thick,every node/.style={circle,fill=blue!20,draw}]
  \node (n2) at (6,8)  {};

  \node (n1) at (8,8) {};

\path[every node/.style={font=\sffamily\small}][densely dashed,blue]
 (n1) edge node [above] {} (n2);
  \tikzstyle{every node}=[font=\small\itshape]

\end{tikzpicture}
$$
 The
degree of symmetry of an algebra may be assessed by using its weight graph. Therefore, we also take into consideration the subgraphs attached to algebra $\mathbf{A}.$
The following implications
hold (the elements of the basis are identified by the graphs on the basis of their subindices):\\
G.1:  If $\alpha\neq \beta$, then $3\not\in Supp(D(e_{2}))$.
$$
\begin{tikzpicture}
[->,>=stealth',shorten >=-5pt,auto,thick,every node/.style={circle,fill=blue!20,draw}]
  \node (n4) at (8,6)  {3};
  \node (n1) at (7,8) {1};
  \node (n3) at (6,6)  {2};
\path[every node/.style={font=\sffamily\small}]
 (n3) edge node [above] {} (n4);
  \tikzstyle{every node}=[font=\small\itshape]
 \draw (n1) -- (n4)  node [midway]{$\alpha$}[densely dashed,blue];
 \draw (n1) -- (n3)  node [midway]{$\beta$}[densely dashed,blue];
\end{tikzpicture}
$$
G.2:  $[D(e_{2})]_{1}=[D(e_{3})]_{1}$
$$
\begin{tikzpicture}
[->,>=stealth',shorten >=-5pt,auto,thick,every node/.style={circle,fill=blue!20,draw}]
  \node (n4) at (8,6)  {3};
  \node (n1) at (7,8) {1};
  \node (n3) at (6,6)  {2};
\path[every node/.style={font=\sffamily\small}]
 (n1) edge node [above] {} (n4)
 (n1) edge node [above] {} (n3);
  \tikzstyle{every node}=[font=\small\itshape]
 \draw (n3) -- (n4)  node [midway]{$\alpha$}[densely dashed,blue];
\end{tikzpicture}
$$
\begin{proof}
  G.1: Suppose that $3\in Supp(D(e_{2}))$, $e_{1}e_{2}=\alpha e_{1}+\alpha^{'}e_{2}$ for $\alpha\neq \frac{1}{2}$, we can obtain $3\in \mathbb{N}_{2}$ and $3\not\in \mathbb{N}_{1}$, . Therefore, we have $3\in \mathbb{N}_{2} \setminus (\mathbb{N}_{1}\cap \mathbb{N}_{2})$, according to the preivous theorem $e_{1}e_{3}=\alpha e_{1}+\alpha^{'}e_{3}$.\\
  \\G.2: Suppose  that G.2 holds. If $1\notin supp(D(e_{2}))\cap$
$supp(D(e_{3}))$, then  $[D(e_{2})]_{1} = [D(e_{3})]_{1}$ because both vanish. Now let $1 \notin supp(D(e_{2}))\cup supp(D(e_{3}))$, and $1\in supp(D(e3))$.
  We observe that $1 \notin \mathbb{N}_{3}\setminus (\mathbb{N}_{3}\cap \mathbb{N}_{2})$ , and according to Theorem 1, we obtain $[e_{2}e_{k}]_{k}$ = $(1- \alpha ) e_{k}$
  for all k $ \mathbb{N}_{3}\ (\mathbb{N}_{3}\cap \mathbb{N}_{2})\cap supp(D(e_{3}))$.
  Therefore, $1\in \mathbb{N}_{2}\cap \mathbb{N}_{3}$ according to the results of Corollary 2.6(ii), where $[D(e_{2})]_{1} = [D(e_{3})]_{1}$.

\end{proof}
\begin{rk}
fully describes the derivation of 3D LV algebra, whereas \cite{7},  presents a new technique for describing the 3D and 4D LV algebra. To describe the derivation of 5D LV algebra, we use the technique proposed in \cite{17} .
\end{rk}
\section{Main Results}
Let $\mathbf{A}$ be a 5D LV algebra over a field whose characteristic is unequal to 2, and  let D is a derivation of $\mathbf{A}$. With reference to a suitable natural basis, A is represented by a weighted graph $\gamma(\mathbf{A})$ , which  denotes the number of edges whose weight is not equal to $\frac{1}{2}$. If $\gamma(\mathbf{A})=0,$ then  $Der(\mathbf{A})$ represents the  linear mapping $\emph{f}$ from $\mathbf{A}$ to $\mathbf{A}$ such that $Im\emph{(f)}\subset N$. We present below some cases of 5D $\gamma(\mathbf{A})$ .
\begin{thm}
  Let $\textbf{A}$ be a 5D LV algebra, and in the following table summarizes its derivations:

\begin{table}[H]
  \centering
  \begin{tabular}{| m{1cm} | m{4cm}| m{10cm}|}
   \hline\hline
     Name  & Graph of  & $Der(\mathbf{A})$   \\
     \hline
   $M_{1}$ & \begin{tikzpicture}\label{35}
  [->,>=stealth',shorten >=-1pt,auto,thick,every node/.style={circle,fill=blue!5,draw}]
  \node (n5) at (8,6) {5};
  \node (n2) at (6,8)  {2};
  \node (n4) at (7,4.6)  {4};
  \node (n1) at (8,8) {1};
  \node (n3) at (6,6)  {3};
  \path[every node/.style={font=\sffamily\small}]
 (n1) edge node [above] {} (n3)
 (n1) edge node [above] {} (n4)
 (n1) edge node [above] {} (n5)
 (n2) edge node [above] {} (n3)
 (n2) edge node [above] {} (n4)
 (n2) edge node [above] {} (n5)
 (n3) edge node [above] {} (n5)
  (n3) edge node [above] {} (n4)
  (n1) edge node [above] {} (n2)
 (n4) edge node [above] {} (n5);

 \end{tikzpicture}& $\{f\in L(\mathbf{A})|Im(f)\subset N\}$\\\hline
  $M_{2}$ & \begin{tikzpicture}\label{35}
  [->,>=stealth',shorten >=-1pt,auto,thick,every node/.style={circle,fill=blue!5,draw}]
  \node (n5) at (8,6) {5};
  \node (n2) at (6,8)  {2};
  \node (n4) at (7,4.6)  {4};
  \node (n1) at (8,8) {1};
  \node (n3) at (6,6)  {3};
  \path[every node/.style={font=\sffamily\small}]
 (n1) edge node [above] {} (n3)
 (n1) edge node [above] {} (n4)
 (n1) edge node [above] {} (n5)
 (n2) edge node [above] {} (n3)
 (n2) edge node [above] {} (n4)
 (n2) edge node [above] {} (n5)
  (n3) edge node [above] {} (n4)
    (n1) edge node [above] {} (n5)
    (n3) edge node [above] {} (n5)
 (n4) edge node [above] {} (n5);
  \draw[densely dashed,blue]  (n1) -- (n2)node [midway]{};
 \end{tikzpicture}& $\{f\in L(\mathbf{A})|Im(f)\subset <e_{3}-e_{4}, e_{3}-e_{5}>$ and $f(e_{1}) = f(e_{2})$\}\\\hline
 $M_{3}$ & \begin{tikzpicture}\label{36}
  [->,>=stealth',shorten >=-5pt,auto,thick,every node/.style={circle,fill=blue!20,draw}]
  \node (n5) at (8,6) {5};
  \node (n2) at (6,8)  {2};
  \node (n4) at (7,4.6)  {4};
  \node (n1) at (8,8) {1};
  \node (n3) at (6,6)  {3};
\path[every node/.style={font=\sffamily\small}]
 (n1) edge node [above] {} (n2)
 (n1) edge node [above] {} (n3)
 (n1) edge node [above] {} (n4)
 (n2) edge node [above] {} (n3)
 (n2) edge node [above] {} (n4)
 (n2) edge node [above] {} (n5)
 (n3) edge node [above] {} (n4)
 (n4) edge node [above] {} (n5);
 \tikzstyle{every node}=[font=\small\itshape]
 \draw[densely dashed,blue]  (n1) -- (n5)node [midway]{$\alpha$};
 \draw[densely dashed,blue]  (n3) -- (n5)node [midway]{$\alpha$};
\end{tikzpicture} & $\{f\in L(\mathbf{A})|Im(f)\subset <e_{2}-e_{4}>$ and $f(e_{3})=f(e_{5})= f(e_{1})\}$\\\hline

 $M_{4}$ & \begin{tikzpicture}\label{35}
[->,>=stealth',shorten >=-5pt,auto,thick,every node/.style={circle,fill=blue!20,draw}]
  \node (n5) at (8,6) {5};
  \node (n2) at (6,8)  {2};
  \node (n4) at (7,4.6)  {4};
  \node (n1) at (8,8) {1};
  \node (n3) at (6,6)  {3};
\path[every node/.style={font=\sffamily\small}]
 (n1) edge node [above] {} (n4)
 (n2) edge node [above] {} (n3)
 (n2) edge node [above] {} (n4)
 (n2) edge node [above] {} (n5)
 (n3) edge node [above] {} (n4)
 (n3) edge node [above] {} (n5)
 (n4) edge node [above] {} (n5);
 \tikzstyle{every node}=[font=\small\itshape]
 \draw[densely dashed,blue]  (n1) -- (n2)node [midway]{$\alpha$};
 \draw[densely dashed,blue]  (n1) -- (n3)node [midway]{$\alpha$};
 \draw[densely dashed,blue]  (n1) -- (n5)node [midway]{$\gamma$};
\end{tikzpicture} & $\{f\in L(\mathbf{A})| e_{1}, e_{5},e_{4}\in ker(f)$ and $Im(f)\subset< e_{2} - e_{3}>\}$ \\\hline

 $M_{5}$ & \begin{tikzpicture}\label{35}
[->,>=stealth',shorten >=-5pt,auto,thick,every node/.style={circle,fill=blue!20,draw}]
  \node (n5) at (8,6) {5};
  \node (n2) at (6,8)  {2};
  \node (n4) at (7,4.6)  {4};
  \node (n1) at (8,8) {1};
  \node (n3) at (6,6)  {3};
\path[every node/.style={font=\sffamily\small}]
 (n1) edge node [above] {} (n4)
 (n2) edge node [above] {} (n3)
 (n2) edge node [above] {} (n4)
 (n2) edge node [above] {} (n5)
 (n3) edge node [above] {} (n4)
 (n3) edge node [above] {} (n5)
 (n4) edge node [above] {} (n5);
 \tikzstyle{every node}=[font=\small\itshape]
 \draw[densely dashed,blue]  (n1) -- (n2)node [midway]{$\alpha$};
 \draw[densely dashed,blue]  (n1) -- (n3)node [midway]{$\alpha$};
 \draw[densely dashed,blue]  (n1) -- (n5)node [midway]{$\alpha$};
\end{tikzpicture} & $\{f\in L(\mathbf{A})|Im(\emph{f})\subset<e_{2}- e_{3},e_{2}-e_{5} >$ and $e_{1}, e_{4}\in$ $ker(\emph{f})\}$ \\\hline
\end{tabular}
\end{table}
\begin{table}[H]
  \centering
   \begin{tabular}{| m{1cm} | m{4cm}| m{10cm} |}
   \hline\hline
     Name  & Graph of  & $Der(\mathbf{A})$   \\
     \hline

 $M_{6}$ & \begin{tikzpicture}\label{35}
[->,>=stealth',shorten >=-5pt,auto,thick,every node/.style={circle,fill=blue!20,draw}]
   \node (n5) at (8,6) {5};
  \node (n2) at (6,8)  {2};
  \node (n4) at (7,4.6)  {4};
  \node (n1) at (8,8) {1};
  \node (n3) at (6,6)  {3};
\path[every node/.style={font=\sffamily\small}]
 (n1) edge node [above] {} (n3)
 (n1) edge node [above] {} (n4)
 (n2) edge node [above] {} (n3)
 (n2) edge node [above] {} (n4)
 (n2) edge node [above] {} (n5)
 (n3) edge node [above] {} (n5)
 (n4) edge node [above] {} (n5);
\tikzstyle{every node}=[font=\small\itshape]
 \draw [densely dashed,blue](n1) -- (n2) node [midway]{$\alpha$}   ;
 \draw [densely dashed,blue](n1) -- (n5) node [midway]{$\alpha$}   ;
 \draw [densely dashed,blue](n3) -- (n4) node [midway]{$\gamma$} ;
\end{tikzpicture}& $\{f\in L(\mathbf{A})|Im(f)\subset<e_{2}-e_{5}>$ and $e_{1},e_{3}, e_{4}\in ker(\emph{f})\}$\\\hline

$M_{7}$ & \begin{tikzpicture}\label{35}
[->,>=stealth',shorten >=-5pt,auto,thick,every node/.style={circle,fill=blue!20,draw}]
   \node (n5) at (8,6) {5};
  \node (n2) at (6,8)  {2};
  \node (n4) at (7,4.6)  {4};
  \node (n1) at (8,8) {1};
  \node (n3) at (6,6)  {3};
\path[every node/.style={font=\sffamily\small}]
 (n1) edge node [above] {} (n3)
 (n1) edge node [above] {} (n4)
 (n2) edge node [above] {} (n3)
 (n2) edge node [above] {} (n4)
 (n3) edge node [above] {} (n4)
 (n3) edge node [above] {} (n5)
 (n4) edge node [above] {} (n5);
 \tikzstyle{every node}=[font=\small\itshape]
 \draw[densely dashed,blue]  (n1) -- (n2)node [midway]{$\alpha$};
 \draw[densely dashed,blue]  (n1) -- (n5)node [midway]{$\gamma$};
 \draw[densely dashed,blue]  (n2) -- (n5)node [midway]{$\beta$};
\end{tikzpicture} & $\{f\in L(\mathbf{A})|Im(f)\subset< e_{3} - e_{4} >$ and $f(e_{1}) = f(e_{2}) = f(e_{5})\}$\\\hline
$M_{8}$&\begin{tikzpicture}\label{35}
[->,>=stealth',shorten >=-5pt,auto,thick,every node/.style={circle,fill=blue!20,draw}]
  \node (n5) at (8,6) {5};
  \node (n2) at (6,8)  {2};
  \node (n4) at (7,4.6)  {4};
  \node (n1) at (8,8) {1};
  \node (n3) at (6,6)  {3};
\path[every node/.style={font=\sffamily\small}]
 (n1) edge node [above] {} (n3)
 (n1) edge node [above] {} (n4)
 (n1) edge node [above] {} (n5)
 (n2) edge node [above] {} (n4)
 (n2) edge node [above] {} (n5)
 (n4) edge node [above] {} (n5);
 \draw[densely dashed,blue]  (n1) -- (n2);
 \draw[densely dashed,blue]  (n2) -- (n3);
 \draw[densely dashed,blue]  (n3) -- (n4)node[midway]{$\alpha$};
 \draw[densely dashed,blue]  (n3) -- (n5)node[midway]{$\alpha$};
\end{tikzpicture}&$\{f\in L(\mathbf{A})| e_{1}, e_{2},e_{3}\in ker(f)$ and $Im(f)\subset< e_{4} - e_{5}>\}$ \\\hline
 $M_{9}$ & \begin{tikzpicture}\label{35}
[->,>=stealth',shorten >=-5pt,auto,thick,every node/.style={circle,fill=blue!20,draw}]
  \node (n5) at (8,6) {5};
  \node (n2) at (6,8)  {2};
  \node (n4) at (7,4.6)  {4};
  \node (n1) at (8,8) {1};
  \node (n3) at (6,6)  {3};
\path[every node/.style={font=\sffamily\small}]
 (n1) edge node [above] {} (n4)
 (n2) edge node [above] {} (n4)
 (n2) edge node [above] {} (n5)
 (n3) edge node [above] {} (n4)
 (n1) edge node [above] {} (n3)
 (n4) edge node [above] {} (n5);
 \tikzstyle{every node}=[font=\small\itshape]
 \draw[densely dashed,blue]  (n1) -- (n2)node [midway]{$\alpha$};
 \draw[densely dashed,blue]  (n2) -- (n3)node [midway]{$\alpha$};
 \draw[densely dashed,blue]  (n3) -- (n5)node [midway]{$\beta$};
 \draw[densely dashed,blue]  (n1) -- (n5)node [midway]{$\beta$};
\end{tikzpicture} & $\{f\in L(\mathbf{A})|f(e_{1}), f(e_{3})\in < e_{1} - e_{3} >$, $f(e_{2}), f(e_{5})\in < e_{2} - e_{5} >$ and
  $e_{4}\in ker(f)\}$\\\hline
 $M_{10}$ & \begin{tikzpicture}\label{35}
[->,>=stealth',shorten >=-5pt,auto,thick,every node/.style={circle,fill=blue!20,draw}]
  \node (n5) at (8,6) {5};
  \node (n2) at (6,8)  {2};
  \node (n4) at (7,4.6)  {4};
  \node (n1) at (8,8) {1};
  \node (n3) at (6,6)  {3};
\path[every node/.style={font=\sffamily\small}]
 (n2) edge node [above] {} (n3)
 (n2) edge node [above] {} (n4)
 (n2) edge node [above] {} (n5)
 (n3) edge node [above] {} (n4)
 (n3) edge node [above] {} (n5)
 (n4) edge node [above] {} (n5);
 \tikzstyle{every node}=[font=\small\itshape]
 \draw[densely dashed,blue]  (n1) -- (n2)node [midway]{$\alpha$};
 \draw[densely dashed,blue]  (n1) -- (n5)node [midway]{$\alpha$};
 \draw[densely dashed,blue]  (n1) -- (n3)node [midway]{$\alpha$};
 \draw[densely dashed,blue]  (n1) -- (n4)node [midway]{$\alpha$};
\end{tikzpicture} & $\{f\in L(\mathbf{A})| e_{1}\in ker(f)$ and $Im(f)\subset < e_{2} - e_{3},  e_{2} - e_{4},  e_{2} - e_{5}>$ \}\\\hline
  \end{tabular}
\end{table}
\begin{table}[H]
  \centering
   \begin{tabular}{| m{1cm} | m{4cm}| m{10cm} |}
   \hline\hline
     Name  & Graph of  & $Der(\mathbf{A})$   \\
     \hline
 $M_{11}$ & \begin{tikzpicture}\label{35}
 [->,>=stealth',shorten >=-5pt,auto,thick,every node/.style={circle,fill=blue!20,draw}]
  \node (n5) at (8,6) {5};
  \node (n2) at (6,8)  {2};
  \node (n4) at (7,4.6)  {4};
  \node (n1) at (8,8) {1};
  \node (n3) at (6,6)  {3};
\path[every node/.style={font=\sffamily\small}]
 (n2) edge node [above] {} (n3)
 (n2) edge node [above] {} (n4)
 (n2) edge node [above] {} (n5)
 (n3) edge node [above] {} (n4)
 (n3) edge node [above] {} (n5)
 (n4) edge node [above] {} (n5);
 \tikzstyle{every node}=[font=\small\itshape]
  \draw[densely dashed,blue]  (n1) -- (n2)node [midway]{$\alpha$};
 \draw[densely dashed,blue]  (n1) -- (n5)node [midway]{$\alpha$};
 \draw[densely dashed,blue]  (n1) -- (n3)node [midway]{$\alpha$};
 \draw[densely dashed,blue]  (n1) -- (n4)node[midway]{$\delta$};
\end{tikzpicture} & \{ $f\in L(\mathbf{A})| Im(f)\subset < e_{2} $ - $e_{3},  e_{2} - e_{5} >$ and $e_{4}, e_{1}\in ker(f)$ \}\\\hline

 $M_{12}$& $\begin{tikzpicture}\label{35}
[->,>=stealth',shorten >=-5pt,auto,thick,every node/.style={circle,fill=blue!20,draw}]
  \node (n5) at (8,6) {5};
  \node (n2) at (6,8)  {2};
  \node (n4) at (7,4.6)  {4};
  \node (n1) at (8,8) {1};
  \node (n3) at (6,6)  {3};
\path[every node/.style={font=\sffamily\small}]
 (n2) edge node [above] {} (n3)
 (n2) edge node [above] {} (n4)
 (n2) edge node [above] {} (n5)
 (n3) edge node [above] {} (n4)
 (n3) edge node [above] {} (n5)
 (n4) edge node [above] {} (n5);
 \tikzstyle{every node}=[font=\small\itshape]
  \draw[densely dashed,blue]  (n1) -- (n2)node [midway]{$\alpha$};
 \draw[densely dashed,blue]  (n1) -- (n5)node [midway]{$\alpha$};
 \draw[densely dashed,blue]  (n1) -- (n3)node [midway]{$\delta$};
 \draw[densely dashed,blue]  (n1) -- (n4)node [midway]{$\delta$};
\end{tikzpicture}$ & \{$f\in L(\mathbf{A})| f(e_{2}), f(e_{5})\in < e_{2} - e_{5}>$, $f(e_{3}), f(e_{4})\in < e_{3} - e_{4}>$ and
$e_{1}\in ker(f)$\}\\\hline
$M_{13}$ & $\begin{tikzpicture}\label{35}
[->,>=stealth',shorten >=-5pt,auto,thick,every node/.style={circle,fill=blue!20,draw}]
  \node (n5) at (8,6) {5};
  \node (n2) at (6,8)  {2};
  \node (n4) at (7,4.6)  {4};
  \node (n1) at (8,8) {1};
  \node (n3) at (6,6)  {3};
\path[every node/.style={font=\sffamily\small}]
 (n2) edge node [above] {} (n3)
 (n2) edge node [above] {} (n4)
 (n2) edge node [above] {} (n5)
 (n3) edge node [above] {} (n4)
 (n3) edge node [above] {} (n5)
 (n4) edge node [above] {} (n5);
 \tikzstyle{every node}=[font=\small\itshape]
  \draw[densely dashed,blue]  (n1) -- (n2)node [midway]{$\alpha$};
 \draw[densely dashed,blue]  (n1) -- (n5)node [midway]{$\alpha$};
 \draw[densely dashed,blue]  (n1) -- (n3)node [midway]{$\gamma$};
 \draw[densely dashed,blue]  (n1) -- (n4)node [midway]{$\delta$};
\end{tikzpicture}$ & $\{f\in L(\mathbf{A})| Im(f)\subset < e_{2} - e_{5}>$ and $e_{1}, e_{4}, e_{3}\in ker(f) \}$\\\hline
$M_{14}$ & \begin{tikzpicture}\label{1}
[->,>=stealth',shorten >=-5pt,auto,thick,every node/.style={circle,fill=blue!10,draw}]
  \node (n5) at (8,6) {5};
  \node (n2) at (6,8)  {2};
  \node (n4) at (7,4.6)  {4};
  \node (n1) at (8,8) {1};
  \node (n3) at (6,6)  {3};
\path[every node/.style={font=\sffamily\small}]
 (n2) edge node [above] {} (n3)
 (n2) edge node [above] {} (n4)
 (n2) edge node [above] {} (n5)
 (n3) edge node [above] {} (n4)
 (n4) edge node [above] {} (n5);
 \tikzstyle{every node}=[font=\small\itshape]
 \draw[densely dashed,blue]  (n1) -- (n2)node [midway]{$\alpha$};
 \draw[densely dashed,blue]  (n1) -- (n3);
 \draw[densely dashed,blue]  (n1) -- (n4)node [midway]{$\alpha$};
 \draw[densely dashed,blue]  (n1) -- (n5);
 \draw[densely dashed,blue]  (n3) -- (n5);
\end{tikzpicture} & $\{f\in L(\mathbf{A})| Im(f)\subset <e_{2} - e_{4}>$ and $e_{1} \in ker(f)\}$\\\hline
$M_{15}$ & \begin{tikzpicture}\label{1}
[->,>=stealth',shorten >=-5pt,auto,thick,every node/.style={circle,fill=blue!10,draw}]
 \node (n5) at (8,6) {5};
  \node (n2) at (6,8)  {2};
  \node (n4) at (7,4.6)  {4};
  \node (n1) at (8,8) {1};
  \node (n3) at (6,6)  {3};
\path[every node/.style={font=\sffamily\small}]
 (n1) edge node [above] {} (n4)
 (n2) edge node [above] {} (n4)
 (n2) edge node [above] {} (n5)
 (n3) edge node [above] {} (n4)
 (n4) edge node [above] {} (n5);
 \tikzstyle{every node}=[font=\small\itshape]
 \draw[densely dashed,blue] (n1) -- (n2)node [midway]{$\alpha$};
 \draw[densely dashed,blue] (n2) -- (n3);
 \draw[densely dashed,blue] (n3) -- (n5);
 \draw[densely dashed,blue] (n1) -- (n5)node [midway]{$\alpha$};
 \draw[densely dashed,blue] (n1) -- (n3);
\end{tikzpicture} & $\{f\in L(\mathbf{A})| Im(f)\subset < e_{2} - e_{5}>$ and $e_{1}, e_{3}, e_{4}\in ker(f)\}$\\\hline
 \end{tabular}
\end{table}
\begin{table}[H]
  \centering
   \begin{tabular}{| m{1cm} | m{4cm}| m{10cm} |}
   \hline\hline
     Name  & Graph of  & $Der(\mathbf{A})$   \\
     \hline
$M_{16}$ & \begin{tikzpicture}\label{1}
[->,>=stealth',shorten >=-5pt,auto,thick,every node/.style={circle,fill=blue!10,draw}]
 \node (n5) at (8,6) {5};
  \node (n2) at (6,8)  {2};
  \node (n4) at (7,4.6)  {4};
  \node (n1) at (8,8) {1};
  \node (n3) at (6,6)  {3};
\path[every node/.style={font=\sffamily\small}]
 (n1) edge node [above] {} (n3)
 (n2) edge node [above] {} (n4)
 (n2) edge node [above] {} (n5)
 (n3) edge node [above] {} (n4)
 (n4) edge node [above] {} (n5);
 \draw[densely dashed,blue] (n1) -- (n2)node [midway]{$\alpha$};
 \draw[densely dashed,blue] (n2) -- (n3);
 \draw[densely dashed,blue] (n3) -- (n5);
 \draw[densely dashed,blue] (n1) -- (n5)node [midway]{$\alpha$};
 \draw[densely dashed,blue] (n1) -- (n4);
\end{tikzpicture} & $\{f\in L(\mathbf{A})| Im(f)\subset < e_{2} - e_{5}>$ and $e_{1}, e_{3}, e_{4}\in ker(f)\}$\\\hline

$M_{17}$ & \begin{tikzpicture}\label{121}
[->,>=stealth',shorten >=-5pt,auto,thick,every node/.style={circle,fill=blue!20,draw}]
  \node (n5) at (8,6) {5};
  \node (n2) at (6,8)  {2};
  \node (n4) at (7,4.6)  {4};
  \node (n1) at (8,8) {1};
  \node (n3) at (6,6)  {3};
\path[every node/.style={font=\sffamily\small}]
 (n2) edge node [above] {} (n4)
 (n4) edge node [above] {} (n5)
 (n2) edge node [above] {} (n5)
 (n3) edge node [above] {} (n4);
 \tikzstyle{every node}=[font=\small\itshape]
 \draw[densely dashed,blue] (n1) -- (n2)node [midway]{$\alpha$};
 \draw[densely dashed,blue] (n2) -- (n3);
 \draw[densely dashed,blue] (n3) -- (n5);
 \draw[densely dashed,blue] (n1) -- (n5)node [midway]{$\alpha$};
 \draw[densely dashed,blue] (n1) -- (n3);
 \draw[densely dashed,blue] (n1) -- (n4);
\end{tikzpicture} & $\{f\in L(\mathbf{A})| Im(f)\subset < e_{2} - e_{5}>$ and  $e_{1}, e_{3}, e_{4}\in ker(f)\}$\\\hline
$M_{18}$ & \begin{tikzpicture}\label{121}
[->,>=stealth',shorten >=-5pt,auto,thick,every node/.style={circle,fill=blue!20,draw}]
   \node (n5) at (8,6) {5};
  \node (n2) at (6,8)  {2};
  \node (n4) at (7,4.6)  {4};
  \node (n1) at (8,8) {1};
  \node (n3) at (6,6)  {3};
\path[every node/.style={font=\sffamily\small}]
 (n2) edge node [above] {} (n5)
 (n3) edge node [above] {} (n4)
 (n1) edge node [above] {} (n3)
 (n4) edge node [above] {} (n5);
 \tikzstyle{every node}=[font=\small\itshape]
 \draw[densely dashed,blue] (n1) -- (n2)node [midway]{$\alpha$};
 \draw[densely dashed,blue] (n2) -- (n3);
 \draw[densely dashed,blue] (n3) -- (n5);
 \draw[densely dashed,blue] (n1) -- (n5)node [midway]{$\alpha$};
 \draw[densely dashed,blue] (n1) -- (n4);
 \draw[densely dashed,blue] (n2) -- (n4);
\end{tikzpicture} & $\{f\in L(\mathbf{A})| Im(f)\subset < e_{2} - e_{5}>$ and $e_{1}, e_{3}, e_{4}\in ker(f)\}$\\\hline
$M_{19}$ & \begin{tikzpicture}\label{32}
[->,>=stealth',shorten >=-5pt,auto,thick,every node/.style={circle,fill=blue!20,draw}]
  \node (n5) at (8,6) {5};
  \node (n2) at (6,8)  {2};
  \node (n4) at (7,4.6)  {4};
  \node (n1) at (8,8) {1};
  \node (n3) at (6,6)  {3};
\path[every node/.style={font=\sffamily\small}]
 (n2) edge node [above] {} (n4)
 (n1) edge node [above] {} (n3)
 (n2) edge node [above] {} (n5);
 \tikzstyle{every node}=[font=\small\itshape]
 \draw[densely dashed,blue] (n1) -- (n2);
 \draw[densely dashed,blue] (n2) -- (n3);
 \draw[densely dashed,blue] (n3) -- (n4);
 \draw[densely dashed,blue] (n4) -- (n5);
 \draw[densely dashed,blue] (n1) -- (n4);
 \draw[densely dashed,blue] (n3) -- (n5)node [midway]{$\alpha$};
 \draw[densely dashed,blue] (n1) -- (n5)node [midway]{$\alpha$};
\end{tikzpicture} & $\{f\in L(\mathbf{A})| Im(f)\subset < e_{1} - e_{3}>$ and $e_{2}, e_{4}, e_{5}\in ker(f)\}$\\\hline
 $M_{20}$ & \begin{tikzpicture}\label{100}
[->,>=stealth',shorten >=-5pt,auto,thick,every node/.style={circle,fill=blue!20,draw}]
  \node (n5) at (8,6) {5};
  \node (n2) at (6,8)  {2};
  \node (n4) at (7,4.6)  {4};
  \node (n1) at (8,8) {1};
  \node (n3) at (6,6)  {3};
\path[every node/.style={font=\sffamily\small}]
 (n1) edge node [above] {} (n3)
 (n2) edge node [above] {} (n5);
 \tikzstyle{every node}=[font=\small\itshape]
 \draw[densely dashed,blue] (n1) -- (n2);
 \draw[densely dashed,blue] (n2) -- (n4)node [midway]{$\delta$};
 \draw[densely dashed,blue] (n1) -- (n4)node [midway]{$\alpha$};
 \draw[densely dashed,blue] (n1) -- (n5);
 \draw[densely dashed,blue] (n2) -- (n3);
 \draw[densely dashed,blue] (n3) -- (n4)node [midway]{$\beta$};
 \draw[densely dashed,blue] (n3) -- (n5);
 \draw[densely dashed,blue] (n4) -- (n5)node [midway]{$\delta$};
\end{tikzpicture} & $\{f\in L(\mathbf{A})| Im(f)\subset < e_{1} - e_{3}>$ and $e_{2}, e_{5}, e_{4}\in ker(f)\}$\\\hline
 \end{tabular}
\end{table}
\begin{table}[H]
  \centering
  \begin{tabular}{|c|c|c|}
    \hline
$M_{21}$ & \begin{tikzpicture}\label{100}
[->,>=stealth',shorten >=-5pt,auto,thick,every node/.style={circle,fill=blue!20,draw}]
  \node (n5) at (8,6) {5};
  \node (n2) at (6,8)  {2};
  \node (n4) at (7,4.6)  {4};
  \node (n1) at (8,8) {1};
  \node (n3) at (6,6)  {3};
\path[every node/.style={font=\sffamily\small}]
 (n1) edge node [above] {} (n3)
 (n2) edge node [above] {} (n5);
 \tikzstyle{every node}=[font=\small\itshape]
 \draw[densely dashed,blue] (n1) -- (n2);
 \draw[densely dashed,blue] (n2) -- (n4)node [midway]{$\delta$};
 \draw[densely dashed,blue] (n1) -- (n4)node [midway]{$\alpha$};
 \draw[densely dashed,blue] (n1) -- (n5);
 \draw[densely dashed,blue] (n2) -- (n3);
 \draw[densely dashed,blue] (n3) -- (n4)node [midway]{$\alpha$};
 \draw[densely dashed,blue] (n3) -- (n5);
 \draw[densely dashed,blue] (n4) -- (n5)node [midway]{$\delta$};
\end{tikzpicture} & $\{f\in L(\mathbf{A})| f(e_{1}), f(e_{3})\in< e_{1} - e_{3}>$ and $f(e_{2}), f(e_{5})\in < e_{2} - e_{5}>\}$\\\hline

  $M_{22}$& \begin{tikzpicture}\label{35}
[->,>=stealth',shorten >=-5pt,auto,thick,every node/.style={circle,fill=blue!20,draw}]
  \node (n5) at (8,6) {5};
  \node (n2) at (6,8)  {2};
  \node (n4) at (7,4.6)  {4};
  \node (n1) at (8,8) {1};
  \node (n3) at (6,6)  {3};
\path[every node/.style={font=\sffamily\small}]
 (n2) edge node [above] {} (n4);
 \tikzstyle{every node}=[font=\small\itshape]
 \draw[densely dashed,blue] (n2) -- (n3)node [midway]{$\alpha$};
 \draw[densely dashed,blue] (n3) -- (n4)node [midway]{$\alpha$};
 \draw[densely dashed,blue] (n1) -- (n2);
 \draw[densely dashed,blue] (n1) -- (n3);
 \draw[densely dashed,blue] (n1) -- (n4);
 \draw[densely dashed,blue] (n1) -- (n5);
 \draw[densely dashed,blue] (n3) -- (n5);
 \draw[densely dashed,blue] (n4) -- (n5);
 \draw[densely dashed,blue] (n2) -- (n5);
\end{tikzpicture}& $\{f\in L(\mathbf{A})| Im(f)\subset < e_{2} - e_{4}>$ and $e_{1}, e_{3}, e_{5}\in ker(f)\}$\\\hline
\end{tabular}
\end{table}
\end{thm}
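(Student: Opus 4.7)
The plan is to handle each of the $22$ graphs $M_1,\dots,M_{22}$ separately, following a uniform procedure. For a given case, I would first read off from the graph the set of dashed edges (those with weight $\neq 1/2$) and compute $\mathbb{S}_k=\{k\}\cup\{i:(i,k)\text{ solid}\}$ and $\mathbb{N}_k=\bigcap_{i\in\mathbb{S}_k}\mathbb{S}_i$ for each $k\in\{1,\dots,5\}$. By Proposition 2.4 this immediately gives $\mathrm{supp}(D(e_k))\subseteq\mathbb{N}_k$ together with $\omega(D(e_k))=0$. In the baseline case $M_1$ (no dashed edges) one has $\mathbb{N}_k=\mathbb{S}$ for every $k$, so the only constraint is $\mathrm{Im}(D)\subset N$, matching the stated description.

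Next, for each dashed edge $(i,j)$ I would invoke Theorem 2.5, decomposing $D(e_i)=u+v$ and $D(e_j)=u+w$ with $u\in U=\langle e_l\mid l\in\mathbb{N}_i\cap\mathbb{N}_j\rangle$, $v\in V$, $w\in W$ and $\omega(u)=\omega(v)=\omega(w)=0$. Corollary 2.6(i) kills $v$ or $w$ whenever the corresponding index set $J$ or $K$ has at most one element, which happens frequently for vertices incident to multiple dashed edges and explains the many kernel assertions (e.g.\ $e_1,e_4\in\ker f$ in $M_5$). Corollary 2.6(ii) then forces the $(\mathbb{N}_i\cap\mathbb{N}_j)$-components of $D(e_i)$ and $D(e_j)$ to coincide, and implication G.1 rules out certain indices from $\mathrm{supp}(D(e_k))$ when a dashed triangle configuration is present. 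In cases with repeated incidence of dashed edges at a single vertex (e.g.\ $M_{10}$--$M_{13}$), these equalities cascade to yield the precise containment $\mathrm{Im}(f)\subset\langle e_2-e_3,e_2-e_4,e_2-e_5\rangle$ or similar subspaces listed in the table.

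The information from Proposition 2.4, Theorem 2.5 and the graph implications is still not enough in every case; I would need to apply the Leibniz rule $D(e_ie_j)=D(e_i)e_j+e_iD(e_j)$ directly to each of the remaining products. In the typical computation, if a dashed product $e_ie_j=\alpha e_i+\alpha'e_j$ acts on some $D(e_k)=\sum c_le_l$, expanding both sides of Leibniz yields equations of the form $c_l(\alpha-\tfrac{1}{2})=0$; since $\alpha\neq\tfrac12$ one concludes $c_l=0$, eliminating components that survived the first two steps. This is what enforces, for instance, $\mathrm{Im}(f)\subset\langle e_3-e_4,e_3-e_5\rangle$ in $M_2$: the constraint on $D(e_3),D(e_4),D(e_5)$ is not visible from Theorem 2.5 applied to the edge $(1,2)$ alone, but emerges from Leibniz applied to $D(e_1e_3)$.

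Finally, for each case I would verify the converse by checking that every $f$ of the claimed form satisfies $f(e_ie_j)=f(e_i)e_j+e_if(e_j)$ for the ten pairs $1\le i<j\le 5$ and the five identities $f(e_k)=2f(e_k)e_k$; since $\mathrm{Im}(f)$ has been reduced to a $1$- or $2$-dimensional subspace of $N$ with an explicit basis, and every multiplication is known, this is a mechanical check. The main obstacle is organizational rather than conceptual: twenty-two graphs must each be analyzed separately with correct bookkeeping of $\mathbb{S}_k$, $\mathbb{N}_k$, and the subspaces $U,V,W$ for every dashed edge, and one must also argue that the chosen list exhausts the relevant weighted-graph types up to the isomorphism relation on natural bases. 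Grouping the cases by $\gamma(A)$ and by the subgraph induced by dashed edges keeps the bookkeeping manageable.
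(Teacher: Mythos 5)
Your plan is correct and follows essentially the same route as the paper: group the algebras by $\gamma(\mathbf{A})$, compute $\mathbb{S}_k$ and $\mathbb{N}_k$ for each graph, and combine Proposition 2.4 (which already yields $\mathrm{supp}(D(e_k))\subseteq\mathbb{N}_k$, e.g.\ the $M_2$ containment you attribute to a direct Leibniz computation), Theorem 2.5, Corollary 2.6 and the implications G.1--G.2 to pin down $Der(\mathbf{A})$ case by case. The only difference is organizational: the paper packages all Leibniz-rule information into those cited lemmas and cardinality conditions such as $|\mathbb{N}_i\setminus(\mathbb{N}_i\cap\mathbb{N}_j)|=1$, whereas you keep a residual direct Leibniz step in reserve; this changes nothing of substance.
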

\begin{proof}
Depending on the order of the set, the proof is divided into the following cases:

\begin{itemize}

\item[Case 1:] If $\gamma(\mathbf{A})=1$, then
 $\mathbf{A}$ can be graphed as follows up to equivalence  and isomorphism :
$$
\begin{tikzpicture}\label{35}
[->,>=stealth',shorten >=-5pt,auto,thick,every node/.style={circle,fill=blue!20,draw}]
  \node (n5) at (8,6) {5};
  \node (n2) at (6,8)  {2};
  \node (n4) at (7,4.6)  {4};
  \node (n1) at (8,8) {1};
  \node (n3) at (6,6)  {3};
\path[every node/.style={font=\sffamily\small}]
 (n1) edge node [above] {} (n3)
 (n1) edge node [above] {} (n4)
 (n1) edge node [above] {} (n5)
 (n2) edge node [above] {} (n3)
 (n2) edge node [above] {} (n4)
 (n2) edge node [above] {} (n5)
 (n3) edge node [above] {} (n5)
  (n3) edge node [above] {} (n4)
 (n4) edge node [above] {} (n5);
 \draw[densely dashed,blue] (n1) -- (n2);
\end{tikzpicture}
$$
In this graph, we have $\mathbb{S}_{1}=\{1,3,4,5\}$,  $\mathbb{S}_{2}=\{2,3,4,5\}=S_{5}$, and   $\mathbb{S}_{k}=\mathbb{S}$ , where $k=3, 4, 5.$  From these we  obtain $\mathbb{N}_{1}=\{1,3,4,5\}$, and $\mathbb{N}_{k}=\{3, 4, 5\}$, where  $k = 3, 4, 5,$ and $\mathbb{N}_{2}=\{2, 3, 4, 5\}$. If we have
\begin{center}
$|\mathbb{N}_{1}\diagdown  (\mathbb{N}_{1}\cap \mathbb{N}_{2})|= |\mathbb{N}_{2}\diagdown  (\mathbb{N}_{1}\cap \mathbb{N}_{2})|=1,$
\end{center}
then we obtain,
\begin{center}
$D(e_{1}),D(e_{2})\in < e_{k}| k\in \mathbb{N}_{1}\cap \mathbb{N}_{2} >\cap N$ = $< e_{3}-e_{4}, e_{3}-e_{5} >$.
\end{center}
Meanwhile, if we have $\mathbb{N}_{k} = \{ 3, 4, 5\}$, then  we obtain,
\begin{center}
$D(e_{3})$, $D(e_{4})$, $D(e_{5})\in <e_{k}| k\in \mathbb{N}_{3}>\cap N$ = $<e_{3}-e_{4}, e_{3}-e_{5}>$.
\end{center}
We find from G.2 that $[D(e_{1})]_{k}=[D(e_{2})]_{k}$, where k = 3, 4, 5. In this case, $[D(e_{1})]=[D(e_{2})]$.\\
\item[Case 2:]If $\gamma(\mathbf{A})=2$, then
 $\mathbf{A}$ an be graphed as follows up to equivalence and isomorphism:
$$
\begin{tikzpicture}\label{35}
[->,>=stealth',shorten >=-5pt,auto,thick,every node/.style={circle,fill=blue!20,draw}]
  \node (n5) at (8,6) {5};
  \node (n2) at (6,8)  {2};
  \node (n4) at (7,4.6)  {4};
  \node (n1) at (8,8) {1};
  \node (n3) at (6,6)  {3};
\path[every node/.style={font=\sffamily\small}]
 (n1) edge node [above] {} (n3)
 (n1) edge node [above] {} (n4)
 (n1) edge node [above] {} (n5)
 (n2) edge node [above] {} (n3)
 (n2) edge node [above] {} (n4)
 (n2) edge node [above] {} (n5)
 (n3) edge node [above] {} (n5)
 (n4) edge node [above] {} (n5);
 \draw[densely dashed,blue] (n1) -- (n2);
  \draw[densely dashed,blue] (n3) -- (n4);
\end{tikzpicture}
\hspace{2cm}
\begin{tikzpicture}\label{36}
[->,>=stealth',shorten >=-5pt,auto,thick,every node/.style={circle,fill=blue!20,draw}]
  \node (n5) at (8,6) {5};
  \node (n2) at (6,8)  {2};
  \node (n4) at (7,4.6)  {4};
  \node (n1) at (8,8) {1};
  \node (n3) at (6,6)  {3};
\path[every node/.style={font=\sffamily\small}]
 (n1) edge node [above] {} (n2)
 (n1) edge node [above] {} (n3)
 (n1) edge node [above] {} (n4)
 (n2) edge node [above] {} (n3)
 (n2) edge node [above] {} (n4)
 (n2) edge node [above] {} (n5)
 (n3) edge node [above] {} (n4)
 (n4) edge node [above] {} (n5);
 \tikzstyle{every node}=[font=\small\itshape]
 \draw[densely dashed,blue]  (n1) -- (n5)node [midway]{$\alpha$};
 \draw[densely dashed,blue]  (n3) -- (n5)node [midway]{$\beta$};
\end{tikzpicture}
$$
where in the first graph, $\mathbb{S}_{1}=\{1 ,3 ,4 ,5\}$,  $\mathbb{S}_{2}=\{2, 3, 4, 5\}$, $\mathbb{S}_{3}=\{1, 2, 3, 5\}$, $\mathbb{S}_{5}=\mathbb{S}$, and  $\mathbb{S}_{4}=\{1, 2, 4, 5\}$, where k = 3, 4, 5. From these, we obtain $\mathbb{N}_{1}=\{1, 5\}$, $\mathbb{N}_{2}=\{2, 5\}$, $\mathbb{N}_{3}=\{3, 5\}$, $\mathbb{N}_{5}=\{5\}$, and  $\mathbb{N}_{5}=\{4, 5\}$. In this case, $D(e_{5})=0$ given that
\begin{center}
$|\mathbb{N}_{k}\diagdown  (\mathbb{N}_{1}\cap \mathbb{N}_{k})|=|\mathbb{N}_{1}\diagdown  (\mathbb{N}_{1}\cap \mathbb{N}_{k})| = |\mathbb{N}_{k}\cap \mathbb{N}_{1}|=1$; . Therefore, $D(e_{k})=0,$ where k = 2, 3, 4, and
\end{center}
$Der(\mathbf{A})$ = 0.\\
As for the second case, we have $\mathbb{S}_{1}=\{1,2,3,4\}$,  $\mathbb{S}_{2}=\mathbb{S}$,  $\mathbb{S}_{3}=\{1, 2, 3, 4\}$,  $\mathbb{S}_{4}=\mathbb{S}$, and $\mathbb{S}_{5}=\{2, 4, 5\}$, and we obtain $\mathbb{N}_{K}=\{1, 2, 3, 4\}$, where k = 1, 3. Given that $\mathbb{N}_{r}=\{2,4\}$ for r = 2, 4 ,$\mathbb{N}_{5}=\{2,4,5\}$; we obtain $D(e_{2})$, $D(e_{4})\in <e_{2}-e_{4}>$.   Following G.2, we have:
\begin{center}
  $[D(e_{1})]_{4}=[D(e_{5})]_{4}\hspace{1.5cm} [D(e_{1})]_{2}=[D(e_{5})]_{2} \Rightarrow D(e_{5})=D(e_{1}).$
\end{center}
Similarly, we can obtain $D(e_{3})=D(e_{5})$. Therefore, $D(e_{3}) = D(e_{5}) = D(e_{1})$. , and $Der(\mathbf{A}) = 0 $ when $\alpha\neq\beta$,  $Der(\mathbf{A})=0$. Following  G.1, if $\alpha=\beta$, then the $f\in L(\mathbf{A})$ set is a deviation if and only if
\begin{center}
$f(e_{1}),f(e_{3}), f(e_{5})\in <e_{i} | i\in\mathbb{N}_{1}\cap\mathbb{N}_{3}\cap\mathbb{N}_{5}> \cap N = < e_{2}-e_{4} >.$\\
\end{center}
\item[Case 3: ] If $\gamma(\mathbf{A})=3$, then
 $\mathbf{A}$ can be graphed up to equivalence and isomorphism as follows:
$$
\begin{tikzpicture}\label{35}
[->,>=stealth',shorten >=-5pt,auto,thick,every node/.style={circle,fill=blue!20,draw}]
  \node (n5) at (8,6) {5};
  \node (n2) at (6,8)  {2};
  \node (n4) at (7,4.6)  {4};
  \node (n1) at (8,8) {1};
  \node (n3) at (6,6)  {3};
\path[every node/.style={font=\sffamily\small}]
 (n1) edge node [above] {} (n4)
 (n2) edge node [above] {} (n3)
 (n2) edge node [above] {} (n4)
 (n2) edge node [above] {} (n5)
 (n3) edge node [above] {} (n4)
 (n3) edge node [above] {} (n5)
 (n4) edge node [above] {} (n5);
 \tikzstyle{every node}=[font=\small\itshape]
 \draw[densely dashed,blue]  (n1) -- (n2)node [midway]{$\alpha$};
 \draw[densely dashed,blue]  (n1) -- (n3)node [midway]{$\beta$};
 \draw[densely dashed,blue]  (n1) -- (n5)node [midway]{$\gamma$};
\end{tikzpicture}
\hspace{2cm}
\begin{tikzpicture}\label{35}
[->,>=stealth',shorten >=-5pt,auto,thick,every node/.style={circle,fill=blue!20,draw}]
   \node (n5) at (8,6) {5};
  \node (n2) at (6,8)  {2};
  \node (n4) at (7,4.6)  {4};
  \node (n1) at (8,8) {1};
  \node (n3) at (6,6)  {3};
\path[every node/.style={font=\sffamily\small}]
 (n1) edge node [above] {} (n3)
 (n1) edge node [above] {} (n4)
 (n2) edge node [above] {} (n3)
 (n2) edge node [above] {} (n4)
 (n3) edge node [above] {} (n4)
 (n3) edge node [above] {} (n5)
 (n4) edge node [above] {} (n5);
 \tikzstyle{every node}=[font=\small\itshape]
 \draw[densely dashed,blue]  (n1) -- (n2)node [midway]{$\alpha$};
 \draw[densely dashed,blue]  (n1) -- (n5)node [midway]{$\gamma$};
 \draw[densely dashed,blue]  (n2) -- (n5)node [midway]{$\beta$};
\end{tikzpicture}
$$
$$
\begin{tikzpicture}\label{35}
[->,>=stealth',shorten >=-5pt,auto,thick,every node/.style={circle,fill=blue!20,draw}]
   \node (n5) at (8,6) {5};
  \node (n2) at (6,8)  {2};
  \node (n4) at (7,4.6)  {4};
  \node (n1) at (8,8) {1};
  \node (n3) at (6,6)  {3};
\path[every node/.style={font=\sffamily\small}]
 (n1) edge node [above] {} (n3)
 (n1) edge node [above] {} (n4)
 (n1) edge node [above] {} (n5)
 (n2) edge node [above] {} (n4)
 (n2) edge node [above] {} (n5)
 (n3) edge node [above] {} (n5)
 (n4) edge node [above] {} (n5);
\tikzstyle{every node}=[font=\small\itshape]
 \draw [densely dashed,blue](n1) -- (n2) node [midway]{$\alpha$}   ;
 \draw [densely dashed,blue](n2) -- (n3) node [midway]{$\beta$}   ;
 \draw [densely dashed,blue](n3) -- (n4) node [midway]{$\gamma$} ;
\end{tikzpicture}
\hspace{2cm}
\begin{tikzpicture}\label{35}
[->,>=stealth',shorten >=-5pt,auto,thick,every node/.style={circle,fill=blue!20,draw}]
   \node (n5) at (8,6) {5};
  \node (n2) at (6,8)  {2};
  \node (n4) at (7,4.6)  {4};
  \node (n1) at (8,8) {1};
  \node (n3) at (6,6)  {3};
\path[every node/.style={font=\sffamily\small}]
 (n1) edge node [above] {} (n3)
 (n1) edge node [above] {} (n4)
 (n2) edge node [above] {} (n3)
 (n2) edge node [above] {} (n4)
 (n2) edge node [above] {} (n5)
 (n3) edge node [above] {} (n5)
 (n4) edge node [above] {} (n5);
\tikzstyle{every node}=[font=\small\itshape]
 \draw [densely dashed,blue](n1) -- (n2) node [midway]{$\beta$}   ;
 \draw [densely dashed,blue](n1) -- (n5) node [midway]{$\alpha$}   ;
 \draw [densely dashed,blue](n3) -- (n4) node [midway]{$\gamma$} ;

\end{tikzpicture}
$$
\\In the first case, $\mathbb{S}_{1}=\{1,4\}$, $\mathbb{S}_{2}=\{2,3,4,5\}=\mathbb{S}_{5}$, $\mathbb{S}_{3}=\{2,3,4,5\}$, and $\mathbb{S}_{4}=\mathbb{S}$, from which we obtain $\mathbb{N}_{1}=\{1,4\}$, and $\mathbb{N}_{k}=\{2,3,4,5\}$, where k = 2, 3, 5, and  $\mathbb{N}_{4}=\{4\}$. Therefore, $D(e_{4})=0$. Given that
\begin{center}
$|\mathbb{N}_{1}\cap \mathbb{N}_{k}|=|\mathbb{N}_{1} \diagdown (\mathbb{N}_{1}\cap \mathbb{N}_{k})|=1$ for k = 2, 3, 5, we have
\end{center}

 $D(e_{1})=0$. According to G.1, if $\alpha\neq \beta \neq \gamma$  $\alpha = \beta \neq\gamma$ then we have $Der(\mathbf{A})=0$. In this case, $Der(\mathbf{A})$ is treated as the set of all $f \in L(\mathbf{A})$ where $e_{5}, e_{1}, e_{4}\in ker(f)$ and
\begin{center}
 $f(e_{2}), f(e_{3})\in <e_{2}-e_{3} >.$
\end{center}
  Both $\alpha \neq\beta=\gamma$ and $\alpha =\gamma\neq\beta$  are analogous cases. Specifically, if  $\alpha = \beta=\gamma,$ then  $Der(\mathbf{A})$ can be easily proven as the set of all $\emph{f}\in L(\mathbf{A})$, such as
$e_{1},e_{4}\in ker(\emph{f}$),  that is,
\begin{center}
 $Im (\emph{f})\subseteq <e_{2},e_{3},e_{5}>\cap N=<e_{2}-e_{3}, e_{2}-e_{5} >.$
\end{center}
In the second case, we have $\mathbb{S}_{1}=\{$ 1, 3, 4 $\}$, $\mathbb{S}_{2}=\{$2, 3, 4$\}$, $\mathbb{S}_{3}=\mathbb{S}$, $\mathbb{S}_{4}=\mathbb{S}$, and $\mathbb{S}_{5}=\{$3, 4, 5$\}$, such that $\mathbb{N}_{1}=\{$1, 3, 4$\}$, $\mathbb{N}_{2}=\{$2, 3, 4$\}$, and $\mathbb{N}_{k}=\{$3, 4$\}$ for
 k = 3, 4. $\mathbb{N}_{5}=\{5, 3, 4\}$. From $\mathbb{N}_{r}=\{3, 4\}$,where r =\{ 3, 4\}, we obtain $D(e_{3}), D(e_{4})\in <e_{3}-e_{4}>$. Given that
 \begin{center}
$|\mathbb{N}_{1}\diagdown  (\mathbb{N}_{1}\cap \mathbb{N}_{k})|= |\mathbb{N}_{k}\diagdown  (\mathbb{N}_{1}\cap \mathbb{N}_{k})|=1,$
\end{center}
 where k = 2, 5, we have
\begin{center}
  $D(e_{1}),D(e_{2}),D(e_{5})\in <e_{k}:k\in \mathbb{N}_{1}\cap \mathbb{N}_{k}>\cap N=<e_{3}-e_{4}>$
\end{center}
Following G.2 we have
\begin{center}
  $[D(e_{1})]_{4}=[D(e_{2})]_{4}=[D(e_{5})]_{4}$ and $[D(e_{1})]_{3}=[D(e_{2})]_{3}=[D(e_{5})]_{3}$.
\end{center}
Therefore,
\begin{center}
 $D(e_{1})=D(e_{2})=D(e_{5})$.
\end{center}

In the third case, we have  $\mathbb{S}_{1}=\{$1, 3, 4, 5 $\}$, $\mathbb{S}_{2}=\{$ 2, 4, 5 $\}$, $\mathbb{S}_{3}=\{$ 1, 3, 5 $\}$, $\mathbb{S}_{4}=\{$ 1, 2, 4, 5 $\}$, and $\mathbb{S}_{5}=\mathbb{S}$, and we obtain $\mathbb{N}_{1}=\{$ 1, 5$\}$, $\mathbb{N}_{2}=\{2,4,5\}$, $\mathbb{N}_{3}=\{1,3,5\}$, $\mathbb{N}_{4}=\{4,5\}$, and  $\mathbb{N}_{5}=\{5\}$. Therefore, $D(e_{5})=0$. Given that
\begin{center}
 $|\mathbb{N}_{1} \diagdown (\mathbb{N}_{1}\cap \mathbb{N}_{2})|=|\mathbb{N}_{1} \cap \mathbb{N}_{2}|=1,$
\end{center}
 we have $D(e_{k})=0$, and k = 1. Similarly, on the basis of $e_{4}$ and $e_{3}$, we obtain $D(e_{4})=0$. Following G.2 we  obtain $[D(e_{2})]_{5}=[D(e_{3})]_{5}$, which leads to  $D(e_{3})=D(e_{2})$. We also obtain $[D(e_{2})]_{4}=[D(e_{1})]_{4}$, where, $D(e_{2}) = D(e_{1})=0$, and $D(e_{3})=D(e_{4})=0$, where $Der(\mathbf{A})=0$.
\\\\In the fourth case, we have $\mathbb{S}_{1}=\{1,3,4\}$, $\mathbb{S}_{2}=\{2,3,4,5\}$, $\mathbb{S}_{3}=\{1,2,4,5\}$, $\mathbb{S}_{4}=\{1,2,4,5\}$, and $\mathbb{S}_{5}=\{2,3,4,5\}$, and we  obtain $\mathbb{N}_{1}=\{1\}$ and $\mathbb{N}_{k}=\{2,5\}$, where k = 2, 5,  $\mathbb{N}_{3}=\{3\}$, and $\mathbb{N}_{4}=\{4\}$. Therefore, $D(e_{r})=0$, where r = 1, 3, 4.  According to G.1, However, if $\alpha\neq \beta$, then $Der(\mathbf{A})=0$. If $\alpha = \beta$ then $Der(\mathbf{A})$ represents  the set of all  $\emph{f}\in L(\mathbf{A})$, where $e_{r}\in ker(f)$  for r = 1, 3, 4, such that
\begin{center}
$\emph{f}(e_{2}), \emph{f}(e_{5})\in <e_{2}, e_{5}>\cap N=< e_{2} - e_{5} >$.\\
\end{center}
\item[Case 4:] If $\gamma(\mathbf{A})=4$,
then  $\mathbf{A}$ can be graphed up to equivalence and isomorphism as follows:
$$
\begin{tikzpicture}\label{35}
[->,>=stealth',shorten >=-5pt,auto,thick,every node/.style={circle,fill=blue!20,draw}]
  \node (n5) at (8,6) {5};
  \node (n2) at (6,8)  {2};
  \node (n4) at (7,4.6)  {4};
  \node (n1) at (8,8) {1};
  \node (n3) at (6,6)  {3};
\path[every node/.style={font=\sffamily\small}]
 (n1) edge node [above] {} (n3)
 (n1) edge node [above] {} (n4)
 (n1) edge node [above] {} (n5)
 (n2) edge node [above] {} (n4)
 (n2) edge node [above] {} (n5)
 (n3) edge node [above] {} (n5);
 \draw[densely dashed,blue] (n1) -- (n2);
 \draw[densely dashed,blue] (n2) -- (n3);
 \draw[densely dashed,blue] (n3) -- (n4);
 \draw[densely dashed,blue] (n4) -- (n5);
\end{tikzpicture}
\hspace{2cm}
\begin{tikzpicture}\label{35}
[->,>=stealth',shorten >=-5pt,auto,thick,every node/.style={circle,fill=blue!20,draw}]
  \node (n5) at (8,6) {5};
  \node (n2) at (6,8)  {2};
  \node (n4) at (7,4.6)  {4};
  \node (n1) at (8,8) {1};
  \node (n3) at (6,6)  {3};
\path[every node/.style={font=\sffamily\small}]
 (n1) edge node [above] {} (n3)
 (n1) edge node [above] {} (n4)
 (n1) edge node [above] {} (n5)
 (n2) edge node [above] {} (n4)
 (n2) edge node [above] {} (n5)
 (n4) edge node [above] {} (n5);
 \draw[densely dashed,blue]  (n1) -- (n2);
 \draw[densely dashed,blue]  (n2) -- (n3);
 \draw[densely dashed,blue]  (n3) -- (n4)node[midway]{$\beta$};
 \draw[densely dashed,blue]  (n3) -- (n5)node[midway]{$\alpha$};
\end{tikzpicture}
$$

$$
\begin{tikzpicture}\label{35}
[->,>=stealth',shorten >=-5pt,auto,thick,every node/.style={circle,fill=blue!20,draw}]
  \node (n5) at (8,6) {5};
  \node (n2) at (6,8)  {2};
  \node (n4) at (7,4.6)  {4};
  \node (n1) at (8,8) {1};
  \node (n3) at (6,6)  {3};
\path[every node/.style={font=\sffamily\small}]
 (n1) edge node [above] {} (n4)
 (n2) edge node [above] {} (n4)
 (n2) edge node [above] {} (n5)
 (n3) edge node [above] {} (n4)
 (n1) edge node [above] {} (n3)
 (n4) edge node [above] {} (n5);
 \tikzstyle{every node}=[font=\small\itshape]
 \draw[densely dashed,blue]  (n1) -- (n2)node [midway]{$\alpha$};
 \draw[densely dashed,blue]  (n2) -- (n3)node [midway]{$\beta$};
 \draw[densely dashed,blue]  (n3) -- (n5)node [midway]{$\gamma$};
 \draw[densely dashed,blue]  (n1) -- (n5)node [midway]{$\delta$};
\end{tikzpicture}
\hspace{2cm}
\begin{tikzpicture}\label{35}
[->,>=stealth',shorten >=-5pt,auto,thick,every node/.style={circle,fill=blue!20,draw}]
  \node (n5) at (8,6) {5};
  \node (n2) at (6,8)  {2};
  \node (n4) at (7,4.6)  {4};
  \node (n1) at (8,8) {1};
  \node (n3) at (6,6)  {3};
\path[every node/.style={font=\sffamily\small}]
 (n2) edge node [above] {} (n3)
 (n2) edge node [above] {} (n4)
 (n2) edge node [above] {} (n5)
 (n3) edge node [above] {} (n4)
 (n3) edge node [above] {} (n5)
 (n4) edge node [above] {} (n5);
 \tikzstyle{every node}=[font=\small\itshape]
 \draw[densely dashed,blue]  (n1) -- (n2)node [midway]{$\alpha$};
 \draw[densely dashed,blue]  (n1) -- (n5)node [midway]{$\beta$};
 \draw[densely dashed,blue]  (n1) -- (n3)node [midway]{$\gamma$};
 \draw[densely dashed,blue]  (n1) -- (n4)node [midway]{$\delta$};

\end{tikzpicture}
$$
In the first case, we have $\mathbb{S}_{1}=\{1,3,4,5\}$, $\mathbb{S}_{2}=\{2,4,5\}$, $\mathbb{S}_{3}=\{1,3,5\}$, $\mathbb{S}_{4}=\{1,2,4\}$, and $\mathbb{S}_{5}=\{1,2,3,5\}$, and we obtain $\mathbb{N}_{k}=\{k\}$, for k = 1, 2, 4, 5 and  $\mathbb{N}_{3}=\{1,3,5\}$. Therefore, $D(e_{k})$ = 0, k=1, 2, 4, 5. Following G.2, we have $[D(e_{3})]_{5}=[D(e_{2})]_{5}$, where $D(e_{3})=D(e_{2})=0$, and $Der(\mathbf{A})=0$.
\\\\In the second case, we have $\mathbb{S}_{1}=\{1,3,4,5\}$, $\mathbb{S}_{2}=\{2,4,5\}$, $\mathbb{S}_{3}=\{1,3\}$, $\mathbb{S}_{4}=\{1,2,4,5\}$, and $\mathbb{S}_{5}=\{1,2,4,5\}$, and we obtain $\mathbb{N}_{1}=\{1\}$, $\mathbb{N}_{2}=\{2,4,5\}$, $\mathbb{N}_{3}=\{1,3\}$, $\mathbb{N}_{4}=\{4,5\}$, and $\mathbb{N}_{5}=\{4,5\}$, where $D(e_{1})=0$. Following G.2 , we have $[D(e_{1})]_{s}=[D(e_{2})]_{s}=0$ (where s = 4, 5).   $[D(e_{3})]_{1}=[D(e_{4})]_{1}$, and  $[D(e_{3})]=[D(e_{4})]$ , and  $[D(e_{3})]=[D(e_{5})]$. If $\alpha \neq \beta$,  then we have $Der(\mathbf{A})=0$, but if $\alpha \neq \beta $, then we have $e_{2},e_{1}\in ker(\mathbf{A})$. Therefore, $f(e_{4}), f(e_{5}) \in N \cap \mathbb{N}_{4}=<e_{4}-e_{5}>$.
 \\\\In the third case, we have $\mathbb{S}_{1}=\{1,3,4\}$, $\mathbb{S}_{2}=\{2,4,5\}$, $\mathbb{S}_{3}=\{1,3,4\}$, $\mathbb{S}_{4}=\mathbb{S}$, and $\mathbb{S}_{5}=\{2,4,5\}$, and $\mathbb{N}_{r}=\{1, 3, 4\}$, (where r=1, 3) $\mathbb{N}_{r}=\{2, 4, 5\}$ (for r= 2, 5). $\mathbb{N}_{4}=\{4\}$.  We obtain $D(e_{4})=0$. Following G.1, if either $\alpha \neq  \beta$ or $\gamma \neq \delta,$ then $D(e_{1})=D(e_{3})=0$,  $\alpha \neq  \delta$ or $\gamma\neq\beta$ , and

   $D(e_{2}),D(e_{5})=0$
. We rearrange the induces  if  necessary and  assume without loss of generality that  if $\alpha =  \beta \neq \gamma = \delta$, then $\emph{f}\in \emph{L}(\mathbf{A})$ is a derivation of $\mathbf{A}$ if and only if $e_{4}\in ker(f)$ and  $\emph{f}(e_{1})$, $\emph{f}(e_{3})\in<e_{1}-e_{3}>$, and $\emph{f}(e_{5})$,$\emph{f}(e_{2})\in<e_{2}-e_{5}>$.
\\\\ In the fourth case, we have $\mathbb{S}_{1}=\{1\}$, and $\mathbb{S}_{k}=\{ 2, 3, 4, 5\}$, where k = 2, 3, 4, 5. We subsequently obtain $\mathbb{N}_{1}=1$, $D(e_{1})=0$, and $\mathbb{N}_{k}=\{2,3,4,5\}$, and k = 2, 3, 4, 5. The following cases are derived accordingly:\\
1)By letting $\alpha= \beta =\gamma=\delta$, $Der(\mathbf{A})$ can be easily proven to be a set of all $\emph{f}\in \emph{L}\mathbf{(A)}$ such that $e_{1}\in ker(f)$ and
\begin{center}
$Im(\emph{f})\subset <e_{2},e_{3},e_{4},e_{5}>\cap N=<e_{2}-e_{3},e_{2}-e_{4},e_{2}-e_{5}>.$
\end{center}

2)By letting $\alpha= \beta =\gamma\neq\delta$, we acquire G.1 forces $e_{4},e_{1}\in ker(f)$  and
\begin{center}
$D(e_{2}),D(e_{3}),D(e_{5})\in U\cap N= <e_{2}-e_{3},e_{2}-e_{5}>.$
\end{center}
Cases $\alpha\neq \beta =\gamma=\delta $, $\alpha=\gamma=\delta\neq \beta $, and $\alpha=\gamma=\beta\neq \delta$ are all analogous.\\

3) By letting $\alpha= \beta \neq\gamma=\delta$, $Der(\mathbf{A})$ can be easily proven to be a set of all $\emph{f}\in \emph{L}(\mathbf{A})$ such that  $f(e_{5})$, $\emph{f}(e_{2})\in <e_{2}-e_{5}>$,  $\emph{f}(e_{3})$, $\emph{f}(e_{4})\in < e_{3} - e_{4} >$  and $e_{1}\in ker(f)$. Cases
$\alpha= \beta \neq\gamma=\delta$, $\alpha= \gamma \neq\beta=\delta$, $\alpha= \delta \neq\gamma=\beta$ are all analogous.\\
4) By letting $\alpha= \beta \neq\gamma\neq\delta$, we have $ e_{1}, e_{4}, e_{3}\in ker(\emph{f})$, $\emph{f}(e_{2}),\emph{f}(e_{5})\in <e_{2}-e_{5}>$. Cases $\alpha\neq\beta \neq\gamma=\delta$ is analogous.\\
\item[Case 5: ]If $\gamma(\mathbf{A})=5$, $\mathbf{A}$ can be graphed up to equivalence and isomorphism  as follows:
$$
\begin{tikzpicture}\label{1}
[->,>=stealth',shorten >=-5pt,auto,thick,every node/.style={circle,fill=blue!10,draw}]
  \node (n5) at (8,6) {5};
  \node (n2) at (6,8)  {2};
  \node (n4) at (7,4.6)  {4};
  \node (n1) at (8,8) {1};
  \node (n3) at (6,6)  {3};
\path[every node/.style={font=\sffamily\small}]
 (n2) edge node [above] {} (n3)
 (n2) edge node [above] {} (n4)
 (n2) edge node [above] {} (n5)
 (n3) edge node [above] {} (n4)
 (n4) edge node [above] {} (n5);
 \tikzstyle{every node}=[font=\small\itshape]
 \draw[densely dashed,blue]  (n1) -- (n2)node [midway]{$\alpha$};
 \draw[densely dashed,blue]  (n1) -- (n3);
 \draw[densely dashed,blue]  (n1) -- (n4)node [midway]{$\beta$};
 \draw[densely dashed,blue]  (n1) -- (n5);
 \draw[densely dashed,blue]  (n3) -- (n5);
\end{tikzpicture}
\hspace{2cm}
\begin{tikzpicture}\label{1}
[->,>=stealth',shorten >=-5pt,auto,thick,every node/.style={circle,fill=blue!10,draw}]
  \node (n5) at (8,6) {5};
  \node (n2) at (6,8)  {2};
  \node (n4) at (7,4.6)  {4};
  \node (n1) at (8,8) {1};
  \node (n3) at (6,6)  {3};
\path[every node/.style={font=\sffamily\small}]
 (n1) edge node [above] {} (n3)
 (n1) edge node [above] {} (n4)
 (n2) edge node [above] {} (n4)
 (n2) edge node [above] {} (n5)
 (n3) edge node [above] {} (n5);
 \tikzstyle{every node}=[font=\small\itshape]
 \draw[densely dashed,blue] (n1) -- (n2)node [midway]{$\alpha$};
 \draw[densely dashed,blue] (n2) -- (n3);
 \draw[densely dashed,blue] (n3) -- (n4);
 \draw[densely dashed,blue] (n4) -- (n5);
 \draw[densely dashed,blue] (n1) -- (n5)node [midway]{$\gamma$};;
\end{tikzpicture}
$$
$$
\begin{tikzpicture}\label{1}
[->,>=stealth',shorten >=-5pt,auto,thick,every node/.style={circle,fill=blue!10,draw}]
 \node (n5) at (8,6) {5};
  \node (n2) at (6,8)  {2};
  \node (n4) at (7,4.6)  {4};
  \node (n1) at (8,8) {1};
  \node (n3) at (6,6)  {3};
\path[every node/.style={font=\sffamily\small}]
 (n1) edge node [above] {} (n4)
 (n2) edge node [above] {} (n4)
 (n2) edge node [above] {} (n5)
 (n3) edge node [above] {} (n4)
 (n4) edge node [above] {} (n5);
 \tikzstyle{every node}=[font=\small\itshape]
 \draw[densely dashed,blue] (n1) -- (n2)node [midway]{$\alpha$};
 \draw[densely dashed,blue] (n2) -- (n3);
 \draw[densely dashed,blue] (n3) -- (n5);
 \draw[densely dashed,blue] (n1) -- (n5)node [midway]{$\gamma$};
 \draw[densely dashed,blue] (n1) -- (n3);
\end{tikzpicture}
\hspace{2cm}
\begin{tikzpicture}\label{1}
[->,>=stealth',shorten >=-5pt,auto,thick,every node/.style={circle,fill=blue!10,draw}]
 \node (n5) at (8,6) {5};
  \node (n2) at (6,8)  {2};
  \node (n4) at (7,4.6)  {4};
  \node (n1) at (8,8) {1};
  \node (n3) at (6,6)  {3};
\path[every node/.style={font=\sffamily\small}]
 (n1) edge node [above] {} (n3)
 (n2) edge node [above] {} (n4)
 (n2) edge node [above] {} (n5)
 (n3) edge node [above] {} (n4)
 (n4) edge node [above] {} (n5);
 \draw[densely dashed,blue] (n1) -- (n2)node [midway]{$\alpha$};
 \draw[densely dashed,blue] (n2) -- (n3);
 \draw[densely dashed,blue] (n3) -- (n5);
 \draw[densely dashed,blue] (n1) -- (n5)node [midway]{$\beta$};
 \draw[densely dashed,blue] (n1) -- (n4);
\end{tikzpicture}
$$
$$
\begin{tikzpicture}\label{1}
[->,>=stealth',shorten >=-5pt,auto,thick,every node/.style={circle,fill=blue!10,draw}]
  \node (n5) at (8,6) {5};
  \node (n2) at (6,8)  {2};
  \node (n4) at (7,4.6)  {4};
  \node (n1) at (8,8) {1};
  \node (n3) at (6,6)  {3};
\path[every node/.style={font=\sffamily\small}]
 (n1) edge node [above] {} (n3)
 (n2) edge node [above] {} (n5)
 (n3) edge node [above] {} (n4)
 (n3) edge node [above] {} (n5)
  (n4) edge node [above] {} (n5);
 \draw[densely dashed,blue] (n1) -- (n2);
 \draw[densely dashed,blue] (n2) -- (n3);
 \draw[densely dashed,blue] (n1) -- (n4);
 \draw[densely dashed,blue] (n2) -- (n4);
 \draw[densely dashed,blue] (n1) -- (n5);
\end{tikzpicture}
$$
In the first case, we have  $\mathbb{S}_{1}=\{1\}$, $\mathbb{S}_{2}=\{2,3,4,5\}$, $\mathbb{S}_{3}=\{2,3,4\}$, $\mathbb{S}_{4}=\{2,3,4,5\}$, and $\mathbb{S}_{5}=\{2,4,5\}$, and we obtain $\mathbb{N}_{1}=\{1\}$, $\mathbb{N}_{2}=\{2,4\}$, $\mathbb{N}_{3}=\{2,3,4\}$, $\mathbb{N}_{4}=\{2,4\}$, and $\mathbb{N}_{5}=\{2,4,5\}$. Therefore, $D(e_{1})=0$ and,
\begin{center}
$|\mathbb{N}_{3} \diagdown (\mathbb{N}_{5}\cap \mathbb{N}_{3})|=|\mathbb{N}_{5} \diagdown (\mathbb{N}_{3}\cap \mathbb{N}_{5})|=1.$
\end{center}
In this case, $\emph{D}(e_{3}),\emph{D}(e_{5})\in U \cap
N=<e_{2}-e_{4}>$. Following G.1, if $\alpha\neq \beta$, then $e_{2},e_{4}\in ker(\emph{f})$.whereas if $\alpha = \beta$, then $\emph{f}\in L(\mathbf{A})$ is considered a derivation of $\mathbf{A}$ if and only if $e_{1}\in ker(f)$ and  $\emph{f}(e_{2}),\emph{f}(e_{4})\in U \cap N =<e_{2}-e_{4}>$ .
\\\\In the second case, we have $\mathbb{S}_{1}=\{1,3,4\}$, $\mathbb{S}_{2}=\{2,4,5\}$, $\mathbb{S}_{3}=\{1,3,5\}$, $\mathbb{S}_{4}=\{1,2,4\}$, and $\mathbb{S}_{5}=\{2,3,5\}$. and, we obtain $\mathbb{N}_{k}=\{k\}$, k=1, 2, 3, 4, 5. Therefore, $Der(\mathbf{A})$ = 0.
\\\\ In the third case, we have $\mathbb{S}_{1}=\{1,4\}$, $\mathbb{S}_{2}=\mathbb{S}_{5}=\{2,4,5\}$, $\mathbb{S}_{3}=\{3,4\}$, $\mathbb{S}_{4}=\{2, 4, 5\}$. and we obtain $\mathbb{N}_{1}=\{1,4\}$, $\mathbb{N}_{k}=\{2,4,5\}$, (where k = 2, 5) $\mathbb{N}_{3}=\{3,4\}$, and $\mathbb{N}_{4}=\{4\}$. In this case, $D(e_{4})=0$. Given that
\begin{center}
$|\mathbb{N}_{1}\cap \mathbb{N}_{3}|=|\mathbb{N}_{3}\setminus (\mathbb{N}_{1}\cap \mathbb{N}_{3})|=
|\mathbb{N}_{1}\setminus (\mathbb{N}_{3}\cap \mathbb{N}_{1})|=1,$
\end{center}
 we have $D(e_{1}),D(e_{3})=0$. If $\alpha\neq \gamma$,  then $Der(\mathbf{A})$=0, whereas if $\alpha=\gamma$, then the set of all $f\in L(\mathbf{A})$, similar that $e_{1}, e_{3}, e_{4}\in ker(f)$ and
\begin{center}
$\emph{f}(e_{2}),\emph{f}(e_{5})\in <e_{2},e_{5}> \cap N=<e_{2}-e_{5}>$
\end{center}
.
\\\\In the fourth case, we have $\mathbb{S}_{1}=\{1,3\}$, $\mathbb{S}_{2}=\{2,4,5\}$, $\mathbb{S}_{3}=\{1,3,4\}$, $\mathbb{S}_{4}=\{2,3,4,5\}$, and $\mathbb{S}_{5}=\{2,4,5\}$ and we obtain $\mathbb{N}_{1}=\{1,3\}$, $\mathbb{N}_{2}=\{2,4,5\}$, $\mathbb{N}_{3}=\{3\}$, $\mathbb{N}_{4}=\{4\}$, and $\mathbb{N}_{5}=\{2,4,5\}$.
Therefore, $D(e_{3}),D(e_{4})=0$. By using the previous theorem, we obtain
  \begin{center}
    $|\mathbb{N}_{1} \diagdown (\mathbb{N}_{3}\cap \mathbb{N}_{1})|=|\mathbb{N}_{3}\cap \mathbb{N}_{1}|=1$.
  \end{center}
  and show that, $D(e_{1})=0$. Following G.1, if $\alpha\neq\beta$, then $Der(\mathbf{A})=0$, whereas if $\alpha = \beta$, then the set of all $f\in L(\mathbf{A})$ similar $e_{1},e_{3},e_{4}\in ker(f)$ and
\begin{center}
 $\emph{f}(e_{2}),\emph{f}(e_{5})\in <e_{2},e_{5}> \cap N=<e_{2}-e_{5}>$
 \end{center}
In the fifth case, we have $\mathbb{S}_{1}=\{ 1, 3\}$, $\mathbb{S}_{2}=\{ 2, 5\}$, $\mathbb{S}_{3}=\{ 1, 3, 4, 5\}$, $\mathbb{S}_{4}=\{ 3, 4, 5\}$, and $\mathbb{S}_{5}=\{2, 3, 4, 5\}$ and we obtain $\mathbb{N}_{1}=\{1,3\}$, $\mathbb{N}_{2}=\{2, 5\}$, $\mathbb{N}_{3}=\{3\}$, $\mathbb{N}_{4}=\{3, 4, 5\}$, and $\mathbb{N}_{5}=\{5\}$. Therefore, $D(e_{k})$ = 0 for k = 3, 4, and
 \begin{center}
    $|\mathbb{N}_{2} \diagdown (\mathbb{N}_{2}\cap \mathbb{N}_{4})|=|\mathbb{N}_{2}\cap \mathbb{N}_{4}|=1.$
  \end{center}

In this case, $D(e_{2})=0$. Following G.2, we obtain $[D(e_{2})]_{5}=[D(e_{4})]_{5}=0$ , which suggests that $D(e_{4})=0$. We also obtain  $D(e_{1})=0$, thereby suggesting that $Der(\mathbf{A})$=0.\\

\item[Case 6: ] If $\gamma(\mathbf{A})=6$, then  $\mathbf{A}$ can be graphed up to equivalence and isomorphism  as follows:
$$
\begin{tikzpicture}\label{sss}
[->,>=stealth',shorten >=-5pt,auto,thick,every node/.style={circle,fill=blue!20,draw}]
  \node (n5) at (8,6) {5};
  \node (n2) at (6,8)  {2};
  \node (n4) at (7,4.6)  {4};
  \node (n1) at (8,8) {1};
  \node (n3) at (6,6)  {3};
\path[every node/.style={font=\sffamily\small}]
 (n1) edge node [above] {} (n3)
 (n2) edge node [above] {} (n4)
 (n2) edge node [above] {} (n5)
 (n3) edge node [above] {} (n5);
 \draw[densely dashed,blue] (n1) -- (n2);
 \draw[densely dashed,blue] (n2) -- (n3);
 \draw[densely dashed,blue] (n3) -- (n4);
 \draw[densely dashed,blue] (n4) -- (n5);
 \draw[densely dashed,blue] (n1) -- (n5);
 \draw[densely dashed,blue] (n1) -- (n4);
\end{tikzpicture}
\hspace{2cm}
\begin{tikzpicture}\label{121}
[->,>=stealth',shorten >=-5pt,auto,thick,every node/.style={circle,fill=blue!20,draw}]
  \node (n5) at (8,6) {5};
  \node (n2) at (6,8)  {2};
  \node (n4) at (7,4.6)  {4};
  \node (n1) at (8,8) {1};
  \node (n3) at (6,6)  {3};
\path[every node/.style={font=\sffamily\small}]
 (n2) edge node [above] {} (n4)
 (n4) edge node [above] {} (n5)
 (n2) edge node [above] {} (n5)
 (n3) edge node [above] {} (n4);
 \tikzstyle{every node}=[font=\small\itshape]
 \draw[densely dashed,blue] (n1) -- (n2)node [midway]{$\alpha$};
 \draw[densely dashed,blue] (n2) -- (n3);
 \draw[densely dashed,blue] (n3) -- (n5);
 \draw[densely dashed,blue] (n1) -- (n5)node [midway]{$\beta$};
 \draw[densely dashed,blue] (n1) -- (n3);
 \draw[densely dashed,blue] (n1) -- (n4);
\end{tikzpicture}
$$
$$
\begin{tikzpicture}\label{121}
[->,>=stealth',shorten >=-5pt,auto,thick,every node/.style={circle,fill=blue!20,draw}]
  \node (n5) at (8,6) {5};
  \node (n2) at (6,8)  {2};
  \node (n4) at (7,4.6)  {4};
  \node (n1) at (8,8) {1};
  \node (n3) at (6,6)  {3};
\path[every node/.style={font=\sffamily\small}]
 (n1) edge node [above] {} (n4)
 (n2) edge node [above] {} (n4)
 (n3) edge node [above] {} (n4)
 (n4) edge node [above] {} (n5);
 \draw[densely dashed,blue] (n1) -- (n2);
 \draw[densely dashed,blue] (n2) -- (n3);
 \draw[densely dashed,blue] (n3) -- (n5);
 \draw[densely dashed,blue] (n1) -- (n5);
 \draw[densely dashed,blue] (n1) -- (n3);
 \draw[densely dashed,blue] (n2) -- (n5);
\end{tikzpicture}
\hspace{2cm}
\begin{tikzpicture}\label{121}
[->,>=stealth',shorten >=-5pt,auto,thick,every node/.style={circle,fill=blue!20,draw}]
   \node (n5) at (8,6) {5};
  \node (n2) at (6,8)  {2};
  \node (n4) at (7,4.6)  {4};
  \node (n1) at (8,8) {1};
  \node (n3) at (6,6)  {3};
\path[every node/.style={font=\sffamily\small}]
 (n2) edge node [above] {} (n5)
 (n3) edge node [above] {} (n4)
 (n1) edge node [above] {} (n3)
 (n4) edge node [above] {} (n5);
 \tikzstyle{every node}=[font=\small\itshape]
 \draw[densely dashed,blue] (n1) -- (n2)node [midway]{$\alpha$};
 \draw[densely dashed,blue] (n2) -- (n3);
 \draw[densely dashed,blue] (n3) -- (n5);
 \draw[densely dashed,blue] (n1) -- (n5)node [midway]{$\beta$};
 \draw[densely dashed,blue] (n1) -- (n4);
 \draw[densely dashed,blue] (n2) -- (n4);
\end{tikzpicture}
$$
$$
\begin{tikzpicture}\label{121}
[->,>=stealth',shorten >=-5pt,auto,thick,every node/.style={circle,fill=blue!20,draw}]
  \node (n5) at (8,6) {5};
  \node (n1) at (6,8)  {1};
  \node (n4) at (7,4.6)  {4};
  \node (n2) at (8,8) {2};
  \node (n3) at (6,6)  {3};
\path[every node/.style={font=\sffamily\small}]
 (n1) edge node [above] {} (n3)
 (n1) edge node [above] {} (n5)
 (n2) edge node [above] {} (n5)
 (n2) edge node [above] {} (n3);
 \draw[densely dashed,blue] (n1) -- (n2);
 \draw[densely dashed,blue] (n1) -- (n4);
 \draw[densely dashed,blue] (n2) -- (n4);
 \draw[densely dashed,blue] (n3) -- (n5);
 \draw[densely dashed,blue] (n3) -- (n4);
 \draw[densely dashed,blue] (n4) -- (n5);
\end{tikzpicture}
$$
In the first case, we have $\mathbb{S}_{1}=\{1,3\}$, $\mathbb{S}_{2}=\{2,4,5\}$, $\mathbb{S}_{3}=\{1,3,5\}$, $\mathbb{S}_{4}=\{2,4\}$, and $\mathbb{S}_{5}=\{2,3,5\}$, and we obtain $\mathbb{N}_{1}=\{1,3\}$, $\mathbb{N}_{2}=\{2\}$, $\mathbb{N}_{3}=\{3\}$, $\mathbb{N}_{4}=\{2,4\}$, and $\mathbb{N}_{5}=\{5\}$. Therefore, $D(e_{2}),D(e_{3}),D(e_{5})=0$. Following G.2,
\begin{center}
$[D(e_{5})]_{2}=[D(e_{4})]_{2}=0$,
\end{center}
which suggests that, $D(e_{4})=0$, $D(e_{1})=0$ and $Der(\mathbf{A})$ = 0.
\\In the second case, we have $\mathbb{S}_{1}=\{1\}$, $\mathbb{S}_{2}=\{2,4,5\}$, $\mathbb{S}_{3}=\{3,4\}$, $\mathbb{S}_{4}=\{2,3,4,5\}$, and $\mathbb{S}_{5}=\{2,4,5\}$ and we obtain $\mathbb{N}_{1}=\{1\}$, $\mathbb{N}_{2}=\{2,4,5\}$, $\mathbb{N}_{3}=\{3,4\}$,
$\mathbb{N}_{4}=\{4\}$, and $\mathbb{N}_{5}=\{2,4,5\}$. Therefore, acquire $D(e_{1}),D(e_{4})=0$. Following the previous theorem,
\begin{center}
$|\mathbb{N}_{3}\diagdown (\mathbb{N}_{2}\cap \mathbb{N}_{3})|=|\mathbb{N}_{2}\cap \mathbb{N}_{3}| =1,$
\end{center}
which means that, $D(e_{3})=0$. With reference to the same theorem, we let $N_{k}=\{ 2, 4, 5\}$ for $k = 2, 5$. In this case, if $\alpha=\beta$, then the  $f \in L(\mathbf{A})$ set can be treated as a derivation if and only if $e_{i}\in ker(f)$, where i = 1, 3, 4 and  $\emph{f}(e_{2}),\emph{f}(e_{5})\in <e_{2}-e_{5}>$, and if $\alpha\neq \beta.$ Therefore, $Der(\mathbf{A})=0$.
\\\\In the third case, we have $\mathbb{S}_{1}=\{1,4\}$, $\mathbb{S}_{2}=\{2,4\}$, $\mathbb{S}_{3}=\{3,4\}$, $\mathbb{S}_{1}=\mathbb{S}$, and $\mathbb{S}_{5}=\{4,5\}$, and we obtain $\mathbb{N}_{1}=\{1,4\}$, $\mathbb{N}_{2}=\{2,4\}$, $\mathbb{N}_{3}=\{3,4\}$, $\mathbb{N}_{4}=\{4\}$, and $\mathbb{N}_{5}=\{4,5\}$. Therefore, $D(e_{4})=0$. Given that
\begin{center}
$|\mathbb{N}_{1}\diagdown (\mathbb{N}_{1}\cap \mathbb{N}_{k})|=|\mathbb{N}_{k}\diagdown (\mathbb{N}_{1}\cap \mathbb{N}_{k})|=|\mathbb{N}_{1}\cap \mathbb{N}_{k}| =1.$
\end{center}
 we obtain $D(e_{k})=0$, where $ k = 1, 2, 3, 5$. In this case, $Der(\mathbf{A})= 0.$
\\\\In the fourth case, we have $\mathbb{S}_{1}=\{1,3\}$, $\mathbb{S}_{2}=\{2,5\}$, $\mathbb{S}_{3}=\{1,3\}$, $\mathbb{S}_{4}=\{3,4,5\}$, and $\mathbb{S}_{5}=\{2,4,5\}$, so that $\mathbb{N}_{1}=\{1,3\}$, $\mathbb{N}_{2}=\mathbb{N}_{5}=\{2,5\}$, $\mathbb{N}_{3}=\{3\}$, and $\mathbb{N}_{4}=\{4\}$.  Therefore, $D(e_{k})=0$ , where k = 3, 4. Following G.2, we obtain
\begin{center}
  $[D(e_{1})]_{3}=[D(e_{4})]_{3} \Rightarrow D(e_{1})=0$.
\end{center}
If $\alpha\neq\beta$, then  $Der(\mathbf{A})=0$, whereas  if $\alpha=\beta,$ then $Der(\mathbf{A})$ represents  the set of all $f\in L(\mathbf{A})$, such as $e_{r}\in ker(f)$, where r = 1, 3, 4. $\emph{f}(e_{2}),\emph{f}(e_{5})\in <e_{2}-e_{5}>$.
\\\\In the fifth case, we have $\mathbb{S}_{1}=\{1,3,5\}$, $\mathbb{S}_{2}=\{2,3,5\}$, $\mathbb{S}_{3}=\{1,2,3\}$, $\mathbb{S}_{4}=\{4\}$, and $\mathbb{S}_{5}=\{1,2,5\}$ and we obtain,
$N_{k}=\{k\}$, where k = 1, 2, 3, 4, 5. In this case, $Der(\mathbf{A})=0$.\\
\item[Case 7: ]If $\gamma(\mathbf{A})=7,$
then  $\mathbf{A}$ can be graphed up to equivalence and isomorphism  as follows:
$$
\begin{tikzpicture}\label{121}
[->,>=stealth',shorten >=-5pt,auto,thick,every node/.style={circle,fill=blue!20,draw}]
  \node (n5) at (8,6) {5};
  \node (n2) at (6,8)  {2};
  \node (n4) at (7,4.6)  {4};
  \node (n1) at (8,8) {1};
  \node (n3) at (6,6)  {3};
\path[every node/.style={font=\sffamily\small}]
 (n2) edge node [above] {} (n5)
 (n2) edge node [above] {} (n4)
 (n3) edge node [above] {} (n5);
 \draw[densely dashed,blue] (n1) -- (n2);
 \draw[densely dashed,blue] (n2) -- (n3);
 \draw[densely dashed,blue] (n3) -- (n4);
 \draw[densely dashed,blue] (n4) -- (n5);
 \draw[densely dashed,blue] (n1) -- (n3);
 \draw[densely dashed,blue] (n1) -- (n4);
  \draw[densely dashed,blue] (n1) -- (n5);
\end{tikzpicture}
\hspace{2cm}
\begin{tikzpicture}\label{32}
[->,>=stealth',shorten >=-5pt,auto,thick,every node/.style={circle,fill=blue!20,draw}]
  \node (n5) at (8,6) {5};
  \node (n2) at (6,8)  {2};
  \node (n4) at (7,4.6)  {4};
  \node (n1) at (8,8) {1};
  \node (n3) at (6,6)  {3};
\path[every node/.style={font=\sffamily\small}]
 (n2) edge node [above] {} (n4)
 (n1) edge node [above] {} (n3)
 (n2) edge node [above] {} (n5);
 \tikzstyle{every node}=[font=\small\itshape]
 \draw[densely dashed,blue] (n1) -- (n2);
 \draw[densely dashed,blue] (n2) -- (n3);
 \draw[densely dashed,blue] (n3) -- (n4);
 \draw[densely dashed,blue] (n4) -- (n5);
 \draw[densely dashed,blue] (n1) -- (n4);
 \draw[densely dashed,blue] (n3) -- (n5)node [midway]{$\alpha$};
 \draw[densely dashed,blue] (n1) -- (n5)node [midway]{$\beta$};
\end{tikzpicture}
$$
$$
\begin{tikzpicture}\label{23}
[->,>=stealth',shorten >=-5pt,auto,thick,every node/.style={circle,fill=blue!20,draw}]
  \node (n5) at (8,6) {5};
  \node (n2) at (6,8)  {2};
  \node (n4) at (7,4.6)  {4};
  \node (n1) at (8,8) {1};
  \node (n3) at (6,6)  {3};
\path[every node/.style={font=\sffamily\small}]
 (n1) edge node [above] {} (n4)
 (n2) edge node [above] {} (n4)
 (n4) edge node [above] {} (n5);
 \draw[densely dashed,blue] (n1) -- (n2);
 \draw[densely dashed,blue] (n2) -- (n3);
 \draw[densely dashed,blue] (n3) -- (n4);
 \draw[densely dashed,blue] (n3) -- (n5);
 \draw[densely dashed,blue] (n1) -- (n5);
 \draw[densely dashed,blue] (n1) -- (n3);
 \draw[densely dashed,blue] (n2) -- (n5);
\end{tikzpicture}
$$
In the first case, we have $\mathbb{S}_{1}=\{1 \}$, $\mathbb{S}_{2}=\{2,4,5 \}$, $\mathbb{S}_{3}=\{3,5 \}$, $\mathbb{S}_{4}=\{2,4 \}$, and
$\mathbb{S}_{5}=\{ 2,3,5 \}$ and we obtain, $ \mathbb{N}_{k}=\{ k \}$, for k = 1, 2, $ \mathbb{N}_{3}=\{3,5 \}$, $\mathbb{N}_{4}=\{ 2,4 \}$, and $\mathbb{N}_{5}=\{ 2,5 \}$. Given that
\begin{center}
 $|\mathbb{N}_{4} \diagdown ( \mathbb{N}_{5}\cap \mathbb{N}_{4})|=|\mathbb{N}_{4} \cap \mathbb{N}_{5}|=|
\mathbb{N}_{4} \diagdown (\mathbb{N}_{3}\cap \mathbb{N}_{5})|=1.$
\end{center}
we obtain $D(e_{4}),D(e_{5})=0$.
Following G.2,
\begin{center}
  $[D(e_{2})]_{5}=[D(e_{3})]_{5} \Rightarrow D(e_{3})=0.$
\end{center}
In this case, $Der(\mathbf{A})=0$.
\\\\In the second case, we have $\mathbb{S}_{1}=\{1,3\}$, $\mathbb{S}_{2}=\{2,4,5\}$,
 $\mathbb{S}_{3}=\{1,3\}$, $ \mathbb{S}_{4}=\{2,4\}$, and $\mathbb{S}_{5}=\{2,5\}$ and we obtain
$\mathbb{N}_{1}=\{1,3\}$,$\mathbb{N}_{2}=\{2\}$, $\mathbb{N}_{3}=\{1,3\}$,  $\mathbb{N}_{4}=\{4\}$, and
$\mathbb{N}_{5}=\{2,5\}$. Therefore, $D(e_{4}), D(e_{2}) = 0$. Given that
\begin{center}
  $[D(e_{5})]_{2}=[D(e_{4})]_{2} \Rightarrow D(e_{5})=0.$
\end{center}
we have $D(e_{5})=0$. If $\alpha\neq\beta$,  then $Der(\mathbf{A})=0$ whereas if $\alpha = \beta$, then $Der(\mathbf{A})$ suggests that the set of all
$\emph{f}\in L(\mathbf{A})$ is a derivation of $\mathbf{A}$ if and only if $e_{2}, e_{4}, e_{5}\in ker(f)$ and $\emph{f}(e_{1}), \emph{f}(e_{3})\in < e_{1} - e_{3} >$.
\\\\In the third case, we have $\mathbb{S}_{1}=\{1,4\}$, $\mathbb{S}_{2}=\{2,4\}$,
 $\mathbb{S}_{3}=\{3\}$, $\mathbb{S}_{4}=\{1,2,4,5\}$, and $\mathbb{S}_{5}=\{4,5\}$, and we obtain
$\mathbb{N}_{1}=\{1,4\}$, $\mathbb{N}_{2}=\{2,4\}$, $\mathbb{N}_{3}=\{3\}$, $\mathbb{N}_{4}=\{4\}$, and
$\mathbb{N}_{5}=\{4,5\}$. Therefore, $D(e_{4}),D(e_{3})=0$. Given that
\begin{center}
$|\mathbb{N}_{1}\diagdown  ( \mathbb{N}_{1} \cap \mathbb{N}_{k})|=|\mathbb{N}_{k}\diagdown  ( \mathbb{N}_{1} \cap \mathbb{N}_{k})|=| \mathbb{N}_{k} \cap \mathbb{N}_{1}|=1.$
\end{center}
we have $D(e_{k})=0$, where k = 1, 2, 5. In this case, $Der(\mathbf{A})=0$.\\
\item[Case 8: ] If $\gamma(\mathbf{A})=8$, then  $\mathbf{A}$ can be graphed up to equivalence and isomorphism as follows:
$$
\begin{tikzpicture}\label{100}
[->,>=stealth',shorten >=-5pt,auto,thick,every node/.style={circle,fill=blue!20,draw}]
  \node (n5) at (8,6) {5};
  \node (n2) at (6,8)  {2};
  \node (n4) at (7,4.6)  {4};
  \node (n1) at (8,8) {1};
  \node (n3) at (6,6)  {3};
\path[every node/.style={font=\sffamily\small}]
 (n2) edge node [above] {} (n4)
 (n2) edge node [above] {} (n5);
 \tikzstyle{every node}=[font=\small\itshape]
 \draw[densely dashed,blue] (n1) -- (n2);
 \draw[densely dashed,blue] (n1) -- (n3);
 \draw[densely dashed,blue] (n1) -- (n4);
 \draw[densely dashed,blue] (n1) -- (n5);
 \draw[densely dashed,blue] (n2) -- (n3);
 \draw[densely dashed,blue] (n3) -- (n4);
 \draw[densely dashed,blue] (n3) -- (n5);
 \draw[densely dashed,blue] (n4) -- (n5);
\end{tikzpicture}
\hspace{2cm}
\begin{tikzpicture}\label{100}
[->,>=stealth',shorten >=-5pt,auto,thick,every node/.style={circle,fill=blue!20,draw}]
  \node (n5) at (8,6) {5};
  \node (n2) at (6,8)  {2};
  \node (n4) at (7,4.6)  {4};
  \node (n1) at (8,8) {1};
  \node (n3) at (6,6)  {3};
\path[every node/.style={font=\sffamily\small}]
 (n1) edge node [above] {} (n3)
 (n2) edge node [above] {} (n5);
 \tikzstyle{every node}=[font=\small\itshape]
 \draw[densely dashed,blue] (n1) -- (n2);
 \draw[densely dashed,blue] (n2) -- (n4)node [midway]{$\gamma$};
 \draw[densely dashed,blue] (n1) -- (n4)node [midway]{$\alpha$};
 \draw[densely dashed,blue] (n1) -- (n5);
 \draw[densely dashed,blue] (n2) -- (n3);
 \draw[densely dashed,blue] (n3) -- (n4)node [midway]{$\beta$};
 \draw[densely dashed,blue] (n3) -- (n5);
 \draw[densely dashed,blue] (n4) -- (n5)node [midway]{$\delta$};
\end{tikzpicture}
$$
In the first case, we have $ \mathbb{S}_{r}=\{r\}$, (where r = 1, 3,) $ \mathbb{S}_{2}=\{2,4,5\}$, $ \mathbb{S}_{4}=\{2,4\}$ and
$\mathbb{S}_{5}=\{2,5\}$, and we obtain
$\mathbb{N}_{k}=\{k\}$, (where k = 1, 2, 3)$ \mathbb{N}_{4}=\{2,4\}$ and $\mathbb{N}_{5}=\{2,5\}$. Therefore,
$D(e_{k})=0$ where k = 1, 2, 3. Given that
\begin{center}
$|\mathbb{N}_{4} \diagdown (\mathbb{N}_{5} \cap \mathbb{N}_{4})|=|\mathbb{N}_{4} \cap \mathbb{N}_{5}|=|\mathbb{N}_{4} \diagdown ( \mathbb{N}_{4}\cap \mathbb{N}_{5})|=1.$
\end{center}
 we have  $D(e_{r}) = 0, r = 4, 5$. In this case, $Der(\mathbf{A})=0$.
\\\\ In the second case
, we have  $\mathbb{S}_{1}=\{1,3\}$, $\mathbb{S}_{2}=\{2,5\}$, $\mathbb{S}_{3}=\{1,3\}$, $\mathbb{S}_{4}=\{4\}$, and
$\mathbb{S}_{5}=\{2,5\}$, and we obtain, $\mathbb{N}_{k}=\{1,3\}$, (k = 1, 3)
$\mathbb{N}_{r}=\{2,4\}$, (r = 2, 5) and $\mathbb{N}_{4}=\{4\}$. Therefore,
$D(e_{4})=0$. Following G.1, if $\alpha\neq\gamma$ or $\beta\neq \delta$,  then $D(e_{2}),D(e_{5})=0$. Similarly, if $\alpha\neq\beta$ or $\gamma \neq \delta$, we have $D(e_{1})=D(e_{3})=0$. In this case, $Der(\mathbf{A})=0$.\\

If $\alpha\neq \beta$, then $\emph{f}\in \emph{L}(\mathbf{A})$ can be treated as a derivation if and only if $e_{1},e_{3}\in ker(\emph{f})$ and $\emph{f}(e_{2}),\emph{f}(e_{5})\in <e_{2}-e_{5}>$. Similarly, if $\alpha\neq\gamma$, then $e_{2},e_{5}\in ker(\emph{f})$ and $\emph{f}(e_{1}),\emph{f}(e_{3})\in <e_{1}-e_{3}>$. If $\alpha=\beta\neq\gamma=\delta$, then $\emph{f}\in L(\mathbf{A})$ can be viewed as a derivation of $\mathbf{A}$ if and only if $e_{4}\in ker(f)$ and
\begin{center}
$\emph{f}(e_{1}),\emph{f}(e_{3})\in <e_{1}-e_{3}>$ and $\emph{f}(e_{2}),\emph{f}(e_{5})\in <e_{2}-e_{5}>$.\\
\end{center}
\item[Case 9:]If $\gamma(\mathbf{A})=9$ then
 $\mathbf{A}$ up to equivalence and isomorphism  as follows:\\
\begin{center}
\begin{tikzpicture}\label{35}
[->,>=stealth',shorten >=-5pt,auto,thick,every node/.style={circle,fill=blue!20,draw}]
  \node (n5) at (8,6) {5};
  \node (n2) at (6,8)  {2};
  \node (n4) at (7,4.6)  {4};
  \node (n1) at (8,8) {1};
  \node (n3) at (6,6)  {3};
\path[every node/.style={font=\sffamily\small}]
 (n2) edge node [above] {} (n4);
 \tikzstyle{every node}=[font=\small\itshape]
 \draw[densely dashed,blue] (n2) -- (n3)node [midway]{$\alpha$};
 \draw[densely dashed,blue] (n3) -- (n4)node [midway]{$\beta$};
 \draw[densely dashed,blue] (n1) -- (n2);
 \draw[densely dashed,blue] (n1) -- (n3);
 \draw[densely dashed,blue] (n1) -- (n4);
 \draw[densely dashed,blue] (n1) -- (n5);
 \draw[densely dashed,blue] (n3) -- (n5);
 \draw[densely dashed,blue] (n4) -- (n5);
 \draw[densely dashed,blue] (n2) -- (n5);
\end{tikzpicture}

\end{center}
In  this case, we have $\mathbb{S}_{k}=\{k\}$ (k = 1, 3, 5) and $\mathbb{S}_{r}=\{2,4\}$, r = 2, 4, and we obtain, $ \mathbb{N}_{k}=\{k\}$ (k = 1, 5, 3)$  \mathbb{N}_{k}=\{2,4\}$ (k = 2, 4), Therefore, $D(e_{k})=0$, where k= 1, 3, 5. Following G.1, if $\alpha\neq \beta$, then $e_{4},e_{2}\in ker(\emph{f})$. Therefore, $Der(\mathbf{A})=0$, and if $\alpha =\beta$, then $\emph{f}\in L(\mathbf{A})$ can be treated as a derivation of $\mathbf{A}$ if and only if  $e_{s}\in ker(f)$, where s= 1, 3, 5, and  $\emph{f}(e_{2}),\emph{f}(e_{4})\in <e_{2}-e_{4}>$.	
\end{itemize}
\end{proof}

\end{document}